\newtheorem{theorem}{Theorem}[section]
\newtheorem{lemma}[theorem]{Lemma}
\newtheorem{prop}[theorem]{Proposition}
\theoremstyle{definition}
\newtheorem{rem}[theorem]{Remark}
\numberwithin{equation}{section}
\newcommand\R{\mathbb{R}}
\newcommand\Z{\mathbb{Z}}
\newcommand\N{\mathbb{N}}
\newcommand\C{\mathbb{C}}
\newcommand\eps{\varepsilon}
\renewcommand{\Re}{\textnormal{Re}}
\renewcommand{\Im}{\textnormal{Im}}
\newcommand{\sump}{\mathop{{\sum}^{\raisebox{-3pt}{\makebox[0pt][l]{$*$}}}}}
\newcommand{\sumflat}{\mathop{{\sum}^{\raisebox{-3pt}{\makebox[0pt][l]{$\flat$}}}}}
\newlength{\lyxlabelwidth}      
\begin{document}

\title{On the Real Zeroes of Half-integral Weight Hecke Cusp Forms}

\author{Jesse J\"a\"asaari}
\address{Department of Mathematics and Statistics\\
  University of Turku\\
  20014 Turku, Finland}
  \email{jesse.jaasaari@utu.fi}
  
\subjclass[2010]{Primary 11F37; Secondary 11M06}
  
\begin{abstract}   
We examine the distribution of zeroes of half-integral weight Hecke cusp forms on the manifold $\Gamma_0(4)\backslash\mathbb H$ near a cusp at infinity. In analogue of the Ghosh--Sarnak conjecture for classical holomorphic Hecke cusp forms, one expects that almost all of the zeroes sufficiently close to this cusp lie on two vertical geodesics $\Re(s)=-1/2$ and $\Re(s)=0$ as the weight tends to infinity. We show that, for $\gg_\varepsilon K^2/(\log K)^{3/2+\varepsilon}$ of the half-integral weight Hecke cusp forms in the Kohnen plus subspaces with weight bounded by a large parameter $K$, the number of such "real" zeroes grows almost at the expected rate. We also obtain a weaker lower bound for the number of real zeroes that holds for a positive proportion of forms. One of the key ingredients is the estimation of averaged first and second moments of quadratic twists of modular $L$-functions. 
\end{abstract}
  
\maketitle
         
\section{Introduction}

\noindent Studying the distribution of zeroes of automorphic forms has attracted attention both historically and more recently. A classical result in the theory of holomorphic modular forms for the group $\mathrm{SL}_2(\mathbb Z)$ on the upper half $\mathbb H$ of the complex plane is the so-called valence formula. It states that the number of (properly weighted) zeroes of such a form $f$ with weight $k$ is asymptotically $k/12$ inside the closure of the fundamental domain $\mathcal D:=\mathrm{SL}_2(\mathbb Z)\backslash\mathbb H=\{z\in\mathbb H:\, -1/2\leq \Re(z) \leq 1/2,\, |z|\geq 1\}$. Interestingly, it turns out that on smaller scales the distribution of zeroes of different types of automorphic forms can vary drastically. For instance, Rankin and Swinnerton-Dyer \cite{Rankin-Swinnerton--Dyer1970} have shown that all the zeroes of holomorphic Eisenstein series lie on the arc $\{|z|=1\}$ and moreover these zeroes are uniformly distributed there as $k\longrightarrow\infty$. In contrast, powers of the modular discriminant have a single zero of high multiplicity at infinity. Another type of behaviour is displayed by Hecke cusp forms, which form a natural basis for the space of modular forms of a given weight. 

For the latter forms Rudnick \cite{Rudnick2005} showed that the equidistribution of zeroes inside $\mathcal D$, as the weight of the form tends to infinity, follows from the holomorphic analogue of the Quantum Unique Ergodicity conjecture (QUE) of himself and Sarnak \cite{Rudnick-Sarnak1994}, which is spelled out explicitly in \cite{Luo-Sarnak2003, Sarnak2001}. As this conjecture has since been shown to hold by Holowinsky and Soundararajan \cite{Holowinsky-Soundararajan2010}, Rudnick's equidistribution result is unconditional. Given this, it is natural to wonder about the finer distributional behaviour of the zeroes of Hecke cusp forms, e.g. their distribution within subsets of $\mathcal D$ that shrink as the weight grows.

Such questions were first explored by Ghosh and Sarnak \cite{Ghosh-Sarnak2012} who considered the distribution of zeroes in shrinking domains around the cusp of the modular surface $\mathcal D$ at infinity. To be precise, they studied zeroes inside the Siegel sets $\mathcal D_Y:=\{z\in\mathcal D:\,\Im(z)\geq Y\}$ with $Y\longrightarrow\infty$ sufficiently fast along with $k\longrightarrow\infty$. Such a set can be regarded as a shrinking ball around the cusp at infinity as the hyperbolic area of $\mathcal D_Y$ equals $1/Y$ and so tends to zero as the weight tends to infinity. When $Y\gg k$ it is easy to see that there are no zeroes in $\mathcal D_Y$ except for the isolated simple zero at the cusp. Ghosh and Sarnak observed that, although the number of zeroes inside $\mathcal D_Y$ is proportional to the area of the domain, the statistical behaviour of the zeroes was very different from the uniform distribution when $\sqrt{k\log k}\ll Y\ll k$. Indeed, they observed that equidistribution of zeroes should not happen inside these sets and, based on  numerical evidence and a random model, were led to conjecture that almost all of the zeroes inside $\mathcal D_Y$, for $Y$ in the same range as before, concentrate on the half-lines $\Re(s)=-1/2$ and $\Re(s)=0$. They termed such zeroes to be "real" as the cusp form is itself real-valued on these lines\footnote{In the same paper they also considered zeroes on the arc $\{|z|=1\}$ where $z^{k/2}f(z)$ is real-valued.}. Ghosh and Sarnak conjectured that $50\%$ of the zeroes in these shrinking Siegel sets should lie on the line $\Re(s)=-1/2$ and likewise $50\%$ on the line $\Re(s)=0$. Furthermore, they obtained some results in this direction. In the end they were able to produce $\gg_\varepsilon (k/Y)^{1/2-1/40-\varepsilon}$ such real zeroes in the range $\sqrt{k\log k}\ll Y<k/100$. The exponent was later improved to $1/2-\eps$ by Matom\"aki \cite{Matomaki2016}. Producing real zeroes on the individual lines $\Re(s)=-1/2$ and $\Re(s)=0$ is more challenging, but Ghosh and Sarnak succeeded, the current best results, which are of polynomial growth, are again due to Matom\"aki \cite{Matomaki2016}. Related to this Lester, Matom\"aki, and Radziwi\l\l$\,$\cite{Lester-Matomaki-Radziwill2018} have also obtained some further results that we shall discuss below in more detail.  

One may of course speculate that similar phenomenon holds also for other types of automorphic forms. The analogue of the holomorphic QUE conjecture is known for half-integral weight Hecke cusp forms thanks to the work of Lester and Radziwi{\l\l} \cite{Lester-Radziwill2020} under the Generalised Riemann Hypothesis (GRH). Similarly to Rudnick's work this implies the equidistribution of zeroes inside the fundamental domain $\Gamma_0(4)\backslash\mathbb H$, where $\Gamma_0(4)$ is the Hecke congruence subgroup of level $4$, of course conditionally on GRH. Given this, it is natural to wonder whether the aforementioned results concerning the small scale distribution of zeroes generalise to these forms. The goal of the present paper is to address this as it seems that these questions have not been explored previously for half-integral weight Hecke cusp forms.  

This is not that surprising as the half-integral weight situation has features that are not present in the setting of classical holomorphic cusp forms. Indeed, Ghosh and Sarnak (and subsequent works) exhibited a relationship between real zeroes of integral weight Hecke cusp forms and sign changes of their Fourier coefficients. We shall do the same in setting of half-integral weight Hecke cusp forms, but here there are certain key differences that make adapting the methods used in the integral weight setting challenging. In the classical case the methods of \cite{Ghosh-Sarnak2012, Matomaki2016, Lester-Matomaki-Radziwill2018} used to study the behaviour of Fourier coefficients rely in a fundamental way on their multiplicativity. However, the Fourier coefficients of half-integral weight Hecke cusp forms lack this property, except at squares. Because of this the methods used in the previous works are not directly applicable in our setting and we have to use different tools to investigate the distribution of real zeroes.

The fundamental domain $\mathcal F:=\Gamma_0(4)\backslash\mathbb H$ is taken to be the domain in the upper half-plane bordered by the vertical lines $\sigma=\pm\frac12$ and circles\footnote{Here $B(x,r)$ denotes the circle with radius $r$ centred at $x$.} $B(-\frac13,\frac13)$, $B(\frac15,\frac15)$, and $B(\frac38,\frac18)$. Notice that this fundamental domain has three cusps at $\infty$, $0$, and $\frac12$ (these have widths $1$, $4$, and $1$, respectively). 

\begin{center}
\begin{tikzpicture}[scale=4]
\draw[very thick, fill=gray!30] (.5,1.5) -- (.5,0) arc (0:98.21:.125) arc (37:180:.2) arc (0:120:.33333) -- (-.5,.8)node[left]{$\delta_1$} -- (-.5,1.5)  ;
\draw[-latex] (-1.2,0) -- (1.2,0)node[below]{Re};
\draw[-latex] (0,-.2) -- (0,.8)node[right]{$\delta_2$} -- (0,1.7)node[right]{Im};
\path(-1,0) --node[below right, pos=.5]{0} (1,0);
\draw(-.5,.02)--(-.5,-.02)node[below]{$-\frac{1}{2}$}(-.33333,.02)--(-.3333,-.02)node[below]{$-\frac{1}{3}$}(.2,.02)--(.2,-.02)node[below]{$\frac{1}{5}$}(.375,.02)--(.375,-.02)node[below]{$\frac{3}{8}$}(.5,.02)--(.5,-.02)node[below]{$\frac{1}{2}$};
\draw[densely dashed] (-.66667,0) arc (180:120:.3333);
\draw[densely dashed] (.5,0) arc (0:180:.125);
\draw[densely dashed] (.4,0) arc (0:180:.2);
\draw[densely dashed] (-.5,0) -- (-.5,.2886751346);
\end{tikzpicture}
\end{center}

\begin{center}
Figure 1. Fundamental domain for the action of the group $\Gamma_0(4)$ on $\mathbb H$.
\end{center}
\noindent It is well-known that any half-integral weight Hecke cusp form may be normalised to have real Fourier coefficients\footnote{Indeed, the numbers $c_g(n)n^{k/2-1/4}$ lie in the field generated over $\mathbb Q$ by the Fourier coefficients of its Shimura lift, which is a level $1$ Hecke eigenform of weight $2k$, so these numbers are real and algebraic (see \cite[Proposition 4.2]{Kumar-Purkait2014} and also the remarks before \cite[Theorem 1]{Kohnen-Zagier1981}).} $c_g(n)$ and we impose this normalisation throughout the paper.

Analogous to the integral weight setting, we study the real zeroes, that is zeroes on the two geodesic segments
\[ 
\delta_1:=\left\{s\in\mathbb C:\,\Re(s)=-\frac12\right\} \qquad \text{and} \qquad \delta_2:=\left\{s\in\mathbb C:\,\Re(s)=0\right\}
\] on which the cusp form takes real values in the normalisation above. Our first main result establishes that there are almost the expected amount (our result is optimal up to a power of the logarithm) of real zeroes for many half-integral weight Hecke cusp forms. Throughout the article, let $k$ be a positive integer. We write $S_{k+\frac12}(4)$ for the space of half-integral weight cusp forms of weight $k+\frac12$ and level $4$. Also, we denote by $S_{k+\frac12}^+(4)\subset S_{k+\frac12}(4)$ the Kohnen plus subspace and let $B^+_{k+\frac12}$ denote a fixed Hecke eigenbasis for $S_{k+\frac12}^+(4)$. Note that for half-integral weight cusp forms we cannot normalise the coefficient $c_g(1)$ to be equal to one without losing the algebraicity of the Fourier coefficients. This means that, unlike in the integral weight case, there is no canonical choice for $B_{k+\frac12}^+$, but this causes no problems for us. We write $\mathcal Z(g):=\{z\in\mathcal F:\,g(z)=0\}$ for the set of zeroes of $g\in S_{k+\frac12}^+(4)$ and let $\mathcal F_Y:=\{z\in\mathcal F\,:\,\Im(z)\geq Y\}$. Finally, set\footnote{Here, and throughout the paper, the notation $\ell\sim L$ means that $\ell\in[L,2L]\cap\mathbb N$. This should not be confused with the asymptotic notation $f\sim g$ for functions $f$ and $g$ used also in the present article.}
\[\mathcal S_K:=\bigcup_{k\sim K}B_{k+\frac12}^+\]
and note that the cardinality of this set is 
\[\sum_{k\sim K}\# B_{k+\frac12}^+=\sum_{k\sim K}\#\mathcal B_k\sim \frac{K^2}4
\]
as $\#\mathcal B_k\sim k/6$, where $\mathcal B_k$ is the Hecke eigenbasis for the space of holomorphic cusp forms of weight $2k$ and level one. 

With these notations our first main result may be stated as follows.

\begin{theorem}\label{main-theorem}
Let $K$ be a large parameter, $\varepsilon>0$ be an arbitrarily small fixed number, and $j\in\{1,2\}$. Then for $\gg_\varepsilon K^2/(\log K)^{3/2+\varepsilon}$ of the forms $g\in\mathcal S_K$ we have 
\[\#\{\mathcal Z(g)\cap\delta_j\cap\mathcal F_Y\}\gg\frac K{Y}\left(\log K\right)^{-23/2}  \]
for $\sqrt{K\log K}\leq Y\leq K^{1-\delta}$ with any small fixed constant $\delta>0$. 
\end{theorem} 

\begin{rem}
Here we have not tried to optimise the powers of logarithm. The exponents present are conveniently chosen so to that the various exponents that appear in the proof are simple fractions.  
\end{rem}

\noindent As indicated above, the methods used in the integral weight case are hard to implement to our setting. However, building on the multiplicativity of the Fourier coefficients at squares, we show that for positive proportion of forms it is also possible to get some real zeroes.

\begin{theorem}\label{second-main-theorem}
Let $K$ be a large parameter, $\eps>0$ be an arbitrarily small fixed number, and $j\in\{1,2\}$. Then for at least $(1/2-\eps)\#\mathcal S_K$ of the forms $g\in\mathcal S_K$ we have 
\[\#\{\mathcal Z(g)\cap\delta_j\cap\mathcal F_Y\}\gg\sqrt{\frac K{Y}} \]
for $\sqrt{K\log K}\leq Y\leq K^{1-\delta}$ with any small fixed constant $\delta>0$. 
\end{theorem}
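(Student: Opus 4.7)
The plan is to produce real zeros of $g$ on $\delta_j\cap\mathcal F_Y$ by exhibiting sign changes of the real-valued restriction $h_j(y):=g(z_j(y))$ on $y\in[Y,2Y]$, where $z_1(y):=-\tfrac12+iy$ and $z_2(y):=iy$. The normalization of real Fourier coefficients ensures that
\[ h_j(y) = \sum_{n\geq 1}\varepsilon_j(n)\,c_g(n)\,e^{-2\pi ny}, \qquad \varepsilon_1(n) := (-1)^n,\ \varepsilon_2(n) := 1, \]
is real-valued, and each sign change of $h_j$ on $[Y,2Y]$ yields a zero of $g$ on $\delta_j\cap\mathcal F_Y$.

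My strategy is to adapt the Hecke amplification arguments of Ghosh--Sarnak and Matom\"aki to the half-integral weight setting. In the classical integral weight case, one uses the multiplicative relation $a_f(p)a_f(n)=a_f(pn)+p^{k-1}a_f(n/p)\mathbf 1_{p\mid n}$ to couple $a_f(p)\cdot f(iy)$ to $f$ evaluated at perturbed arguments, transferring sign information from the Hecke eigenvalues $a_f(p)$ at primes $p\asymp K/Y$ to the values of $f$; the density $\gg(K/Y)/\log K$ of such primes yields the classical count. In the half-integral weight case the available multiplicativity lives only at squares, via the Shimura--Kohnen Hecke relation at $p^2$ expressing $c_g(p^2n)$ as a linear combination of $\lambda_{f_g}(p)c_g(n)$, $p^{k-1}c_g(n)$, and $p^{2k-1}c_g(n/p^2)$, where $f_g\in S_{2k}(\mathrm{SL}_2(\Z))$ is the Shimura lift of $g$. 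Accordingly, the natural amplifier uses squares of primes, and the relevant eigenvalues are $\lambda_{f_g}(p)$ for primes $p\asymp\sqrt{K/Y}$; the density $\gg\sqrt{K/Y}/\log K$ of such primes is exactly what is needed to match Theorem~\ref{second-main-theorem}.

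Executing this plan, for each prime $p$ in the window the Hecke identity above couples the values $h_j(y)$, $h_j(p^2 y)$, and an explicit correction $h_j^{(p)}(y):=\sum_n c_g(p^2 n)\,\varepsilon_j(n)\,e^{-2\pi ny}$ through $\lambda_{f_g}(p)$. Choosing probe points $y_p := (k-1)/(4\pi p^2)$ and applying an intermediate value argument along the sequence of primes in $[\sqrt{K/(8\pi Y)},\sqrt{K/(4\pi Y)}]$, every sign change of $p\mapsto\lambda_{f_g}(p)$ in the window forces a sign change of $h_j$ on $[Y,2Y]$. Matom\"aki--Radziwi\l\l\ type short-interval sign-change estimates for the Hecke eigenvalues of holomorphic newforms then supply $\gg M/\log M$ such sign changes on the range $[M,2M]$ with $M:=\sqrt{K/Y}$, for the typical Shimura lift $f_g$ and hence, via the Shimura correspondence, for the typical $g\in\mathcal S_K$. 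A Paley--Zygmund second-moment argument over $\mathcal S_K$ then secures the conclusion for a proportion $\geq 1/2-\varepsilon$ of forms.

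The main obstacle is controlling the error terms introduced by the Hecke identity, which involve averages of $c_g(dm^2)$ at squarefree $d>1$. These are bounded by invoking the Waldspurger--Kohnen--Zagier identity $|c_g(|D|)|^2\propto L(1/2,f_g\otimes\chi_D)$ for fundamental discriminants $D$, together with the first and second moment estimates for $L(1/2,f_g\otimes\chi_D)$ averaged over $g\in\mathcal S_K$, which are developed as key technical inputs of the paper. Ensuring that the error contribution is dominated by the main term for a positive proportion of $g$ is where the moment computations for quadratic twists of modular $L$-functions enter crucially.
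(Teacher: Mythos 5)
Your overall strategy — probe points $y_{p^2}\asymp Y$ at which $g(\alpha+iy_{p^2})$ is governed by the single coefficient $c_g(p^2)\propto c_g(1)\bigl(\lambda_f(p)-\chi_1(p)p^{-1/2}\bigr)$, so that sign changes of the Shimura lift's data at arguments $\asymp\sqrt{K/Y}$ produce real zeros at height $\asymp Y$ — is the same as the paper's. But the quantitative engine you invoke does not exist. You need $\gg M/\log M$ sign changes of $p\mapsto\lambda_{f}(p)$ \emph{along consecutive primes} $p\in[M,2M]$ with $M=\sqrt{K/Y}$ as small as $K^{\delta/2}$, and you attribute this to "Matom\"aki--Radziwi\l\l\ type short-interval sign-change estimates." Those estimates concern multiplicative functions evaluated at \emph{integers} in short intervals; they say nothing about the sign pattern of $\lambda_f$ restricted to primes. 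What one can prove (by family averages/Petersson, or quantitative Sato--Tate on average) is that both signs occur with positive frequency among $p\sim M$ for almost all $f$ — but that yields only \emph{one} sign change, since the positive and negative primes could be sorted into two long blocks; ruling that out would require moment computations localized to windows containing $O(1)$ primes, which is hopeless by current methods. The paper's proof avoids this entirely: it fixes a fundamental discriminant $d$, forms the genuinely multiplicative function $h_g(m)=\mathrm{sgn}\bigl(c_g(|d|)c_g(|d|m^2)\bigr)$ on \emph{all} integers $m\le X=\sqrt{K/Y}$, applies Matom\"aki--Radziwi\l\l's Theorem 3 to transfer the long-range averages of $|h_g|\pm h_g$ to almost all intervals of length $H=\log K$, and seeds the long-range lower bound $\sum_{m\le X}(|h_g(m)|\mp h_g(m))\gg X/p^{\nu}$ with the Chen--Wu--Zhang input that the first prime power $p^\nu$ with $\lambda_f(p)-\chi_d(p)p^{-1/2}<0$ satisfies $p^\nu\ll\log k$ for almost all $f$. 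That is the missing idea, and without it your count of sign changes collapses to $O(1)$.

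A second, smaller gap: the proportion $1/2-\eps$ cannot come from a generic Paley--Zygmund second-moment argument. Unmollified, the first and second moments of $L(1/2,f\otimes\chi_d)$ over the family differ by a factor of $\log K$, so Cauchy--Schwarz gives non-vanishing of $c_g(|d|)$ (equivalently of the base coefficient your amplifier multiplies) for only a proportion $\gg 1/\log K$ of forms. The figure $1/2-\eps$ is exactly the Iwaniec--Sarnak \emph{mollified} non-vanishing proportion: $\ge 1/4-\eps$ of all pairs $(k,f)$, hence $\ge 1/2-\eps$ of those with $(-1)^kd>0$ (the other half vanish by root number). You need Lemmas \ref{second-moment} and \ref{fourth-moment} with the mollifier $\mathcal M_{f,d}$, i.e.\ Proposition \ref{non-vanishing}, to reach the stated proportion.
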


\noindent As far as the author is aware of, these are the first results concerning the small scale distribution of zeroes of Hecke cusp forms in the half-integral weight setting\footnote{However, for elements in certain canonical basis, see \cite{Folsom-Jenkins2016}.}. Recall that the domain $\mathcal F_Y$ contains $\asymp k/Y$ zeroes of $g\in B_{k+\frac12}^+$, at least conditionally on GRH. The above results give progress towards the conjecture\footnote{In the integral weight case a probabilistic model for this is given by Ghosh and Sarnak \cite[Section 6]{Ghosh-Sarnak2012} and the same model works also in the half-integral weight setting.} that the number of zeroes of $g\in B^+_{k+\frac12}$ is $\gg k/Y$ on $\delta_j\cap \mathcal F_Y$ for both of the individual lines $\delta_j$ in the same range of $Y$ as before\footnote{One may refine this conjecture to state that each of the individual line segments $\delta_j\cap\mathcal F_Y$ contain $50\%$ of the zeroes.}. That is, apart from $\log$-powers our first result shows that we get the expected number of real zeroes for many, albeit not a positive proportion of, forms. The above theorems may be compared to results of Lester, Matom\"aki, and Radziwi{\l\l} \cite{Lester-Matomaki-Radziwill2018} who showed that in the setting of classical holomorphic Hecke cusp forms one has (for both $j\in\{1,2\})$
\[\#\{\mathcal Z(f)\cap\delta_j\cap \mathcal D_Y\}\gg_\eps\left(\frac k{Y}\right)^{1-\varepsilon}\]
for all $\varepsilon>0$ under the Generalised Lindel\"of Hypothesis, and unconditionally that for almost all forms one has
\[\#\left\{\mathcal Z(f)\cap\delta_j\cap\mathcal D_Y\right\}\asymp\frac kY \] 
in the same range of $Y$ as above. However, as indicated above, our techniques used to prove Theorem \ref{main-theorem} are very different compared to those used in the integral weight setting. 

\section{The Strategy}   

\noindent In this section we describe the main ideas that go into the proofs of the main theorems. Let $k$ be a positive integer and let $g$ be a Hecke cusp form of half-integral weight $k+\frac12$ and level $4$ that belongs to the Kohnen plus subspace.  Every such $g$ has a Fourier expansion
\begin{align*}
g(z)=\sum_{\substack{n=1\\
(-1)^k n\equiv 0,1\,(\text{mod }4)}}^\infty c_g(n)n^{\frac k2-\frac14}e(nz)\end{align*}
with normalised real Fourier coefficients $c_g(n)$, which encode arithmetic information. For instance, Waldspurger's formula shows that for fundamental discriminants $d$ with $(-1)^kd>0$, $|c_g(|d|)|^2$ is proportional to the central value of an $L$-function, and so the magnitude of the $L$-function essentially determines the size of the coefficient $c_g(|d|)$. We shall use this fact repeatedly. 

Let us then explain how to exploit information about the Fourier coefficients $c_g(n)$ in order to produce real zeroes. By a steepest descent argument it turns out that on the half-lines $\sigma+iy$, with $\sigma\in\{-\frac12,0\}$ and $y>0$, the values $g(\sigma+iy_\ell)$ for $y_\ell:=(k-1/2)/4\pi\ell$ and $\ell\in[c_1,c_2k/Y]$, for some absolute constants $c_1$ and $c_2$, are essentially determined by the single Fourier coefficient $c_g(\ell)$. Morally this reduces finding zeroes on these lines (i.e., real zeroes) to studying sign changes\footnote{In reality we need something a bit stronger, but for the purposes of sketching the argument we pretend here that it suffices to find sign changes.} of Fourier coefficients $c_g(\ell)$. More precisely, we consider a dyadic subinterval $[X,2X]\subset[c_1,c_2k/Y]$ with $X\asymp k/Y$ and wish to show that $c_g(\ell)$ has many sign changes for many forms $g$ as $\ell$ traverses over $\mathbb N\cap[X,2X]$.

The question of finding sign changes has been considered by many authors following the works of Knopp--Kohnen--Pribitkin \cite{KKP2003} and Bruinier--Kohnen \cite{Bruinier-Kohnen2008}, the former of which showed that such forms have infinitely many sign changes. Subsequent works \cite{HKKL2012, Kohnen-Lau-Wu2013, Lau-Royer-Wu2016} showed that the sequence $\{c_g(n)\}_n$ exhibits many sign changes under suitable conditions. It is essential for our approach to obtain quantitative results that are uniform in the weight aspect.   

Due to the lack of multiplicativity of these coefficients, the methods used to produce sign changes \cite{Ghosh-Sarnak2012, Matomaki2016, Lester-Matomaki-Radziwill2018} are not readily available in our setting. For some special subsequences of coefficients, methods of multiplicative number theory are useful. For example, for a fixed positive squarefree integer $t$ the coefficients $c_g(tm^2)$ are (essentially) multiplicative. This can be used in the proof of Theorem \ref{second-main-theorem} as we now explain. 

For this we rely on the following identity relating Fourier coefficients of half-integral weight Hecke cusp form $g$ at certain arguments and the Fourier coefficients of its Shimura lift $f$ at primes:
\begin{align}\label{Fourier-coeff-rel}
c_g(|d|p^2)=c_g(|d|)\left(\lambda_f(p)-\frac{\chi_d(p)}{\sqrt p}\right).
\end{align}
This is a special case of \cite[Equation (2)]{Kohnen-Zagier1981}. Here $p$ is any prime, $d$ is a fundamental discriminant with $(-1)^kd>0$, $\lambda_f(p)$ are the Hecke eigenvalues of $f$, and $\chi_d$ is the unique real quadratic character of conductor $|d|$. 

To benefit from this we have the following auxiliary result concerning the size of $c_g(|d|)$. For $g\in S_{k+\frac12}^+(4)$ let us define
\begin{align}\label{Normalisation}
\alpha_g:=\frac{\Gamma\left(k-\frac12\right)}{2\cdot(4\pi)^{k-\frac12}\|g\|_2^2},
\end{align}   
where the inner product $\|g\|_2^2:=\langle g,g\rangle$ is defined in (\ref{Inner_product}). With this normalising factor the analogue of the Ramanujan-Petersson conjecture for the Fourier coefficients predicts that for a Hecke eigenform $g$ we have $\sqrt{\alpha_g}c_g(m)\ll_\varepsilon k^{-1/2}(km)^\varepsilon$ with the implied constant depending only on $\varepsilon>0$.

\begin{prop}\label{non-vanishing}
Let $K$ be a large parameter and $\varepsilon,\,\theta>0$ be arbitrarily small, but fixed. Then for any fixed odd fundamental discriminant $d$ with $|d|\ll 1$ we have $\sqrt{\alpha_g}|c_g(|d|)|> k^{-1/2-\theta}$ for at least $(1/4-\eps)\#\mathcal S_K$ of the forms $g\in\mathcal S_K$ as $K\longrightarrow\infty$.
\end{prop}
\noindent This follows by combining asymptotics for the mollified second and fourth moments of the Fourier coefficients $c_g(|d|)$ using the Cauchy--Schwarz inequality and removing the harmonic weights using an approach of Iwaniec and Sarnak \cite{Iwaniec-Sarnak2000} detailed in \cite{Kowalski-Michel1999, Balkanova-Frolenkov2021}. The relevant moments are connected to moments of $L$-functions via Waldspurger's formula and so the mollifier is constructed to counteract the large values of $L(1/2,f\otimes \chi_d)$. To be more specific, a Dirichlet series mollifier is a short linear form
\[\sum_{\ell\leq L}\frac{\lambda_f(\ell)\chi_d(\ell)x_\ell}{\sqrt \ell},\]
where $L>0$ is the length of the mollifier and real numbers $x_\ell\ll\log L$ are chosen so that the mollifier mimics the behaviour of $L(1/2,f\otimes\chi_d)^{-1}$. Essentially the optimal choice for the coefficients $x_\ell$ translates to mollifier behaving approximately as
\[\sum_{\ell\leq L}\frac{\mu(\ell)\lambda_f(\ell)\chi_d(\ell)}{\sqrt\ell}\left(1-\frac{\log\ell}{\log L}\right)\]

The moment results required are stated in two lemmas below. These have been obtained\footnote{Notice that the leading constants in the main terms in \cite[Theorem 3]{Iwaniec-Sarnak2000} should be halved.} by Iwaniec and Sarnak \cite[Theorem 3]{Iwaniec-Sarnak2000}. Throughout the text we set
\[\omega_f:=\frac{2\pi^2}{2k-1}\cdot\frac1{L(1,\text{sym}^2f)}\]
for $f\in S_{2k}(1)$ and write $\mathcal B_k$ for the Hecke eigenbasis of $S_{2k}(1)$, the space of holomorphic cusp forms of weight $2k$ and full level. It is useful to note that  
\begin{align}\label{weights-average}
\sum_{f\in \mathcal B_k}\omega_f\sim 1. 
\end{align} 
Write $\mathcal M_{f,d}$ for the Iwaniec--Sarnak mollifier\footnote{In our notation the length $L$ of the mollifier corresponds to the parameter $M$ in \cite{Iwaniec-Sarnak2000}.} described in \cite[p. 163]{Iwaniec-Sarnak2000}.
 
\begin{lemma}\label{second-moment}
Let $h$ be a smooth compactly supported function on $\mathbb R_+$ taking non-negative values. Then, for the Iwaniec--Sarnak mollifier of length $L\leq |d|^{-1}K(\log K)^{-20}$, there exists a small absolute constant $c>0$ so that uniformly in odd fundamental discriminants $d$ with $|d|\leq K^c$ we have
\[\sum_{k\in\Z}h\left(\frac{2k}K\right)\sum_{f\in \mathcal B_k}\omega_f L\left(\frac12,f\otimes\chi_d\right)\mathcal M_{f,d}\sim \frac K2\int\limits_0^\infty h(t)\,\mathrm d t.\]
\end{lemma}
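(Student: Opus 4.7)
The plan is to begin with an approximate functional equation for $L(\tfrac12,f\otimes\chi_d)$. Under the sign condition $(-1)^kd>0$ the root number is $+1$, yielding
\[ L(\tfrac12,f\otimes\chi_d)=2\sum_{n\geq 1}\frac{\lambda_f(n)\chi_d(n)}{\sqrt n}\,V_k\!\left(\frac{n}{|d|}\right), \]
where $V_k$ is a smooth weight essentially equal to $1$ for $n\ll k|d|$ and of rapid decay thereafter. Substituting this together with the explicit form of $\mathcal M_{f,d}$ and interchanging the finite sums, I would then apply Petersson's trace formula
\[ \sum_{f\in\mathcal B_k}\omega_f\lambda_f(n)\lambda_f(\ell)=\delta_{n=\ell}+2\pi i^{-2k}\sum_{c\geq 1}\frac{S(n,\ell;c)}{c}\,J_{2k-1}\!\left(\frac{4\pi\sqrt{n\ell}}{c}\right), \]
splitting the quantity of interest into a diagonal piece and a Kloosterman--Bessel off-diagonal.

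For the diagonal $n=\ell$, the identity $\chi_d(\ell)^2=\mathbf{1}_{(\ell,d)=1}$ together with $V_k(\ell/|d|)=1+O_A(K^{-A})$ in the range $\ell\leq L\ll K/(|d|(\log K)^{20})$ reduces the contribution to
\[ 2\sum_k h(k/K)\sum_{\substack{\ell\leq L\\(\ell,d)=1}}\frac{x_\ell}{\ell}+O_A(K^{-A}). \]
With $x_\ell$ the appropriately normalised linear combination of $\mu(\ell)$ and $\mu(\ell)\log\ell$ described in the text, the inner sum is evaluated via the standard M\"obius--log identity $\sum_{\ell\leq L}\mu(\ell)\log(L/\ell)/\ell=1+o(1)$ (equivalent to the prime number theorem); the coprimality constraint with $d$ produces only a local correction factor absorbed by the normalisation of $x_\ell$. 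The diagonal therefore contributes $K\int_0^\infty h(t)\,\mathrm d t\,(1+o(1))$, which is the claimed asymptotic.

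The hard part is the off-diagonal. In our range $n\leq k|d|\ll K^{1+\delta}$ and $\ell\leq L\ll K^{1-\delta}/|d|$, so $\sqrt{n\ell}\leq K(\log K)^{-10}$, comfortably below $K$. For $k\sim K$ the Bessel function $J_{2k-1}(x)$ is exponentially small in $K$ once $x\ll K$, so the $k$-sum of $h(k/K)\,i^{-2k}J_{2k-1}(x)$ is negligible in this regime; combined with the Weil bound $|S(n,\ell;c)|\leq\tau(c)\sqrt{c(n,\ell,c)}$ and summing trivially over $n$, $\ell$, $c$, the off-diagonal contributes at most $O(K(\log K)^{-A})$ for any $A>0$.

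The principal anticipated obstacle is establishing this off-diagonal bound uniformly in $d$: the AFE length $k|d|\sim K^{1+\delta}$ and the mollifier length $L\sim K^{1-\delta}/|d|$ have product essentially $K^2$, precisely the threshold where the Bessel estimate breaks down. The $(\log K)^{-20}$ saving built into the choice of $L$ gives just enough slack, but one must track carefully how factors of $|d|$ propagate through the AFE cutoff, the normalisation of the $x_\ell$, and the Weil bound --- the delicate bookkeeping that forces the range $|d|\leq K^\delta$ for some small but fixed $\delta>0$. These are precisely the technical points handled by Iwaniec--Sarnak in their treatment of the analogous untwisted problem, and the same techniques adapt here with only minor modifications to accommodate the quadratic twist.
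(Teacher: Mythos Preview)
Your approach is correct and matches the paper's treatment: the paper does not give its own proof of this lemma but simply cites Iwaniec--Sarnak \cite{Iwaniec-Sarnak2000}, Theorem~3, and your sketch (approximate functional equation, Petersson, diagonal via the M\"obius--log identity, off-diagonal killed by the uniform Bessel bound since $\sqrt{n\ell}\ll K/(\log K)^{10}$) is precisely the Iwaniec--Sarnak argument. One small point worth making explicit: for $k$ with $(-1)^kd<0$ the central value vanishes identically by the sign of the functional equation, so only $k$ of one parity contribute, and the factor $2$ from the AFE together with the density $\tfrac12$ of the surviving $k$ combine to give the stated constant.
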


\begin{lemma}\label{fourth-moment}
Let $h$ be a smooth compactly supported function on $\mathbb R_+$ taking non-negative values. Then, for the Iwaniec--Sarnak mollifier of length $L\leq |d|^{-1}K(\log K)^{-20}$, there exists a small absolute constant $c>0$ so that uniformly in odd fundamental discriminants $d$ with $|d|\leq K^c$ we have
\[\sum_{k\in\Z}h\left(\frac{2k}K\right)\sum_{f\in \mathcal B_k}\omega_fL\left(\frac12,f\otimes\chi_d\right)^2\mathcal M_{f,d}^2\sim K\left(\int\limits_0^\infty h(t)\,\mathrm d t\right)\left(1+\frac{\log |d|K}{\log L} \right).\]
\end{lemma}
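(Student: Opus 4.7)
The plan is to follow the mollifier method of Iwaniec-Sarnak \cite{Iwaniec-Sarnak2000}, adapting their analysis to the weight-averaged setting at hand. The starting point is the approximate functional equation for $L(s,f\otimes\chi_d)^2$ at the central point: since the completed $L$-function is self-dual with analytic conductor of size $|d|k$, squaring the standard approximate functional equation gives
\[
L\!\left(\tfrac12,f\otimes\chi_d\right)^2 \;=\; 2\sum_{n\geq 1}\frac{d_f(n)\chi_d(n)}{\sqrt n}\,V_k\!\left(\frac{n}{|d|K}\right)
\]
up to negligible error, where $d_f(n):=\sum_{ab=n}\lambda_f(a)\lambda_f(b)$ and $V_k$ is a smooth cutoff of rapid decay with $V_k(0)=1$. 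Opening $\mathcal M_{f,d}^2$ and using Hecke multiplicativity to collapse each quadruple product $\lambda_f(\ell_1)\lambda_f(\ell_2)\lambda_f(a)\lambda_f(b)$ into a linear combination of single $\lambda_f(m)$'s, the left hand side of the lemma reduces to weighted spectral averages of the shape
\[
\sum_{k\in\Z}h\!\left(\frac kK\right)\sum_{f\in\mathcal B_k}\omega_f\,\lambda_f(m)\lambda_f(n),
\]
with combinatorial coefficients supported on $m,n$ of controlled size.

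Next I would apply Petersson's trace formula for each weight $2k$, splitting each spectral average into a diagonal contribution $\delta_{m=n}$ and a Kloosterman-Bessel off-diagonal. For the off-diagonal, averaging $J_{2k-1}(4\pi\sqrt{mn}/c)$ against the smooth $k$-weight $h(k/K)$ and invoking Weil's bound for $S(m,n;c)$ yields an estimate $O(K(\log K)^{-A})$ for any fixed $A>0$, provided the mollifier length obeys $L\leq|d|^{-1}K(\log K)^{-20}$; this keeps the off-diagonal negligible next to the main term of order $K\log K/\log L$. The surviving diagonal is a constrained multiple sum in $\ell_1,\ell_2,a,b$, in which the $k$-summation contributes the factor $K\int_0^\infty h(t)\,\d t$ and the factor $2$ is inherited from the approximate functional equation. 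Evaluating the remaining arithmetic sum by a two-variable Mellin-Perron unfolding in $(s_1,s_2)$ around the central point, with Euler products whose local factors at primes dividing $d$ are controlled uniformly, produces a principal double pole; plugging in the specific coefficients $x_\ell\sim\mu(\ell)(1-\log\ell/\log L)$ and carrying out the contour shift promotes this to the stated factor $1+\log(|d|K)/\log L$, where the summand $1$ arises from the pure residue and the logarithmic summand from the interaction between the linear structure of $x_\ell$ and the length $|d|K$ of the $L^2$-expansion.

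The main obstacle will be controlling the off-diagonal Kloosterman terms uniformly in the fundamental discriminant $|d|\leq K^\delta$: the twist by $\chi_d$ modifies the relevant exponential sums, and the average over $k$ of the Bessel transform has to be done carefully so as to gain a power of $\log K$ against the main term. This is precisely the analysis carried out in \cite{Iwaniec-Sarnak2000}, and the restriction $L\leq|d|^{-1}K(\log K)^{-20}$ has been calibrated to leave just enough room for the off-diagonal to be smaller than the main term by the required fixed power of the logarithm, while still allowing the mollifier to capture the bulk of $L(1/2,f\otimes\chi_d)^{-1}$.
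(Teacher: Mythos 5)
Your outline is essentially the argument the paper relies on: the paper gives no proof of this lemma at all, but simply quotes it from Iwaniec--Sarnak \cite[Theorem 3]{Iwaniec-Sarnak2000}, and your sketch (approximate functional equation for the square, opening $\mathcal M_{f,d}^2$ via Hecke relations, Petersson's formula, diagonal main term plus negligible off-diagonal, Mellin/contour evaluation of the diagonal with the specific $x_\ell$) is a faithful reconstruction of that proof. Two small corrections to your write-up. First, the approximate functional equation for $L(1/2,f\otimes\chi_d)^2$ has length the full analytic conductor $(|d|k)^2$, not $|d|K$; your cutoff should read $V_k\bigl(n/(|d|K)^2\bigr)$, and it is precisely this length that produces the $\log(|d|K)=\tfrac12\log\bigl((|d|K)^2\bigr)$ in the main term. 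Second, the off-diagonal is not controlled by Weil's bound: after averaging $i^{2k}h(k/K)J_{2k-1}(4\pi\sqrt{mn}/c)$ over $k$, the resulting transform is negligible unless $\sqrt{mn}/c\gg K^{2-o(1)}$, while the constraint $L\leq|d|^{-1}K(\log K)^{-20}$ forces $\sqrt{mn}\leq|d|KL\,K^{o(1)}\leq K^2(\log K)^{-20+o(1)}$, so the surviving range of $c$ is empty and the off-diagonal dies outright; this, rather than cancellation in Kloosterman sums, is what the length restriction on the mollifier is calibrated for.
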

\noindent Given these, in the beginning of Section $10$ we explain that for a fixed odd fundamental discriminant $d$ with $|d|\ll 1$, the number of forms $f\in\bigcup_{k\sim K} \mathcal B_{k}$ for which $\omega_f L(1/2,f\otimes\chi_d)> k^{-1-2\theta}$ is 
\[> \frac{1}{4}\left(1-\frac{\log |d|}{\log K}\right)\sum_{k\sim K}\#\mathcal B_k.\]
But as the constraint $\omega_f L(1/2,f\otimes\chi_d)>k^{-1-2\theta}$ is equivalent to $\sqrt{\alpha_g}|c_g(|d|)>k^{-1/2-\theta}$ when $f$ is the Shimura lift of $g$ by Waldspurger's formula, Proposition \ref{non-vanishing} follows. 

From this we may conclude the proof of Theorem \ref{second-main-theorem} as follows. In our argument the precise range of uniformity in $d$ does not matter as we need these asymptotics for fixed $d\ll 1$. As alluded above, to find real zeroes it suffices to produce sign changes. Here the idea is to study sign changes of $c_g(|d|m^2)$ with $|d|\ll 1$ fixed and $m$ traversing over odd natural numbers. We will show that for a positive proportion of the forms $g\in B_{k+\frac12}^+$ one gets a positive proportion of sign changes for the sequence $c_g(|d|m^2)$ as $m$ traverses over the odd integers in the dyadic interval $[X,2X]$ with $X\asymp\sqrt{K/|d|Y}$. This can be done by implementing arguments of Lester, Matom\"aki, and Radziwi{\l\l} \cite{Lester-Matomaki-Radziwill2018} to (essentially) study sign changes of the multiplicative function $m\mapsto c_g(|d|)^{-1}c_g(|d|m^2)$ using (\ref{Fourier-coeff-rel}). For this Proposition \ref{non-vanishing} provides important information about the size of $c_g(|d|)$ for a positive proportion of forms. This leads to the promised amount of sign changes for at least $(1/2-\eps)\#\mathcal S_K$ of the forms in $\mathcal S_K$ by using different choices of $d$ to treat forms $g\in B_{k+\frac12}^+$ depending on the parity of $k$ separately. 

\begin{rem}
As indicated above, for a fixed $d$ we are able to study sign changes along the individual sequences $c_g(|d|m^2)$ as $m$ varies. Ideally one would want to study the sign changes of the Fourier coefficients by varying both $m$ and $d$. Note e.g. that for different fundamental discriminants $d$ the sequences $\{|d|p^2\}_p$, with $p$ traversing over the primes, are disjoint, but unfortunately it seems very hard to understand how these sequences are entangled. 
\end{rem}
\noindent Concerning the first main theorem it turns out that there is another way to produce sign changes along the fundamental discriminants that is not based on the multiplicativity of the Fourier coefficients. Recall that to find zeroes on individual lines $\delta_j$ it suffices (essentially) to detect sign changes of $c_g(m)$ along odd integers. For this we rely on the Shimura correspondence which attaches to $g\in S^+_{k+\frac12}(4)$ a classical integral weight Hecke cusp form $f$ of weight $2k$ and full level. Throughout the article we normalise the Shimura lift so that its first Fourier coefficient equals one.

For detecting sign changes we use an approach, which builds upon \cite{Matomaki-Radziwill2015, Lester-Radziwill2021}. We aim to obtain sign changes of $c_g(|d|)$ along squarefree $d\equiv 1\,(\text{mod }4)$, $(-1)^kd>0$, with $|d|\sim X$ where now $X\asymp K/Y$. The idea is to divide the dyadic interval $[X,2X]$ into short intervals $[x,x+H]$ with $1\leq H\leq x$ chosen as small as possible in terms of the weight so that we are able to detect sign changes and show that for many half-integral weight forms $g$ it holds that for many $x\sim X$ such a short interval $[x,x+H]$ contains a sign change of $c_g(|d|)$. 

To detect a sign change of $c_g(|d|)$ along such $d$ with $(-1)^kd\in[x,x+H]$ it suffices to have
\[\left|\sumflat_{x\leq (-1)^kd\leq x+H}\,c_g(|d|)\right|<\sumflat_{x\leq (-1)^kd\leq x+H}\,\left|c_g(|d|)\right|,\]
where $\sumflat\,$ means summing over squarefree integers $d\equiv 1$ (mod $4$).

We also note that with the normalisation (\ref{Normalisation}) Waldspurger's formula (see (\ref{Waldspurger-rel}) below) takes the form
\begin{align}\label{connecting-normalisations}
\alpha_g|c_g(|d|)|^2=\omega_fL\left(\frac12,f\otimes\chi_d\right),
\end{align}
where the Shimura lift $f\in S_{2k}(1)$ is normalised so that $\lambda_f(1)=1$, for any fundamental discriminant $d$ with $(-1)^k>0$. This is verified in Section $5$. Throughout the text the letter $g$ is reserved for half-integral weight cusp forms and its Shimura lift is always denoted by the letter $f$. Because of (\ref{connecting-normalisations}) it is convenient to normalise by the factor $\sqrt{\alpha_g}$ and seek for the inequality 
\[\left|\sumflat_{x\leq (-1)^kd\leq x+H}\,\sqrt{\alpha_g}c_g(|d|)\right|<\sumflat_{x\leq (-1)^kd\leq x+H}\,\sqrt{\alpha_g}\left|c_g(|d|)\right|,\]
which naturally leads to a sign change of $c_g(|d|)$ on the interval $[x,x+H]$ as $\sqrt{\alpha_g}$ is a positive real number. 

The following two estimates\footnote{Sometimes we write $\#\mathcal X$ for the cardinality of a set $\mathcal X$.} form a bulk of the proof of the first main theorem. Let us temporarily assume $X\asymp K/Y$ and let $H$ be a parameter with $1\leq H\leq X$ to be specified later. Here we recall the assumption $\sqrt{K\log K}\ll Y\ll K^{1-\delta}$ from which it follows that $K^\delta\ll X\ll\sqrt{K/\log K}$.

\begin{prop}\label{Prop1}
We have 
\[\#\left\{g\in\mathcal S_K:\,\#\left\{x\sim X:\,\left|\sumflat_{x\leq (-1)^kd\leq x+H}\,\sqrt{\alpha_g}c_g(|d|)\right|\geq \sqrt H k^{-1/2}(\log K)^3\right\}\gg \frac X{(\log X)^3}\right\}\ll \frac{K^{2}}{(\log K)^3}.\]
\end{prop}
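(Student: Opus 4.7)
The approach is a second-moment method. Set
\[
T_g := \sum_{x \sim X} \left|\sumflat_{x \leq (-1)^k d \leq x+H} \sqrt{\alpha_g}\, c_g(|d|)\right|^2.
\]
If $g$ lies in the outer exceptional set of the proposition, then since each of the at least $\gg X/(\log X)^3$ bad values $x$ contributes at least the threshold squared $Hk^{-1}(\log K)^6$ to $T_g$, one has $T_g \gg XH(\log K)^6/(K(\log X)^3)$. Summing over such $g$ and using a Markov-type inequality, the cardinality of the exceptional set is at most
\[
\ll \frac{K(\log X)^3}{X H (\log K)^6}\sum_{g \in \mathcal S_K} T_g.
\]
Since the hypothesis $Y \leq K^{1-\delta}$ implies $X \geq K^\delta$ and hence $\log X \asymp \log K$, it suffices to prove the uniform bound
\[
\sum_{g \in \mathcal S_K} T_g \;\ll\; KXH.
\]

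Expanding the square and swapping summations gives
\[
\sum_{g \in \mathcal S_K} T_g = \sum_{k \sim K}\sum_{x \sim X}\sumflat_{d_1, d_2}\ \sum_{g \in B_{k+\frac12}^+} \alpha_g\, c_g(|d_1|)\, c_g(|d_2|),
\]
where $d_1, d_2$ run over squarefree integers $\equiv 1 \pmod{16}$ with $x \leq (-1)^k d_i \leq x+H$. I split the analysis into the diagonal $d_1 = d_2$ and the off-diagonal $d_1 \neq d_2$.

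For the diagonal, Waldspurger's identity (\ref{connecting-normalisations}) converts the inner $g$-sum into a harmonic first moment of quadratic twists,
\[
\sum_{g \in B_{k+\frac12}^+} \alpha_g\, c_g(|d|)^2 = \sum_{f \in \mathcal B_k} \omega_f\, L\!\left(\tfrac12, f \otimes \chi_d\right),
\]
which is $O(1)$ uniformly in $d$ in the relevant range by the standard (unmollified) first-moment asymptotic of Iwaniec-Sarnak; note that the sign restriction $(-1)^k d > 0$ forces the root number to be $+1$, so there is no automatic cancellation. Trivial estimation in $k \sim K$, in the $O(H)$ admissible $d$, and in $x \sim X$ delivers a diagonal contribution of $O(KXH)$, matching the required bound exactly.

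The off-diagonal is the heart of the argument. Here I invoke the Petersson trace formula on Kohnen's plus space, which expresses the inner $g$-sum as a series indexed by moduli $c \equiv 0 \pmod 4$ involving Salié-type sums $K^+(d_1, d_2; c)$ and Bessel factors $J_{k-\frac12}(4\pi\sqrt{|d_1 d_2|}/c)$. Since $|d_i| \ll X \ll K^{1-\delta}$, the Bessel factor is exponentially small for $c \ll \sqrt{|d_1 d_2|}/k$, confining the effective range of $c$ to a narrow window; the Salié sums, moreover, admit an explicit evaluation as sums of $O(1)$ exponentials, giving the pointwise bound $|K^+(d_1, d_2; c)| \ll \sqrt{c}$. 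The main technical obstacle is extracting nearly square-root cancellation from the resulting Salié-weighted Bessel averages in order to absorb the $O(H^2)$ pairs $(d_1, d_2)$ within a window of length $H$; a natural organising principle is to parametrise $d_2 - d_1 = h$ with $|h| \leq H$ and attack the resulting shifted sums using the oscillation of $J_{k-\frac12}$ upon averaging over $k \sim K$ and over $c$, together with Weil-type bounds for the explicit Salié expressions, thereby reducing the off-diagonal contribution to $O(KXH)$ as well.
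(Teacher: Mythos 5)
Your overall architecture --- Markov's inequality applied to the second moment $T_g$, opening the square, and splitting into the diagonal $d_1=d_2$ and the off-diagonal --- is exactly the paper's route, and your diagonal treatment is sound: for $|d|\ll X\ll\sqrt K$ the per-weight harmonic first moment $\sum_{f\in\mathcal B_k}\omega_f L(\tfrac12,f\otimes\chi_d)$ is indeed $O(1)$ (the paper instead invokes its $k$-averaged Lemma \ref{second-moment-with-average}, but it remarks that the extra $k$-average is not needed for this step), giving the required $O(KXH)$.

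The gap is in the off-diagonal, which you flag as ``the heart of the argument'' and then only sketch, proposing square-root cancellation in Sali\'e-weighted Bessel averages over $c$ and $k$. That machinery is not needed, and your description of the Bessel regime is backwards: $J_\nu(t)$ decays super-exponentially when the \emph{argument} is small compared to the \emph{order}, i.e.\ for $t\leq(1-\epsilon)\nu$, not for large $t$. Here the argument is $4\pi\sqrt{|d_1d_2|}/c\ll X/c$, and since $Y\geq\sqrt{K\log K}$ forces $X=K/Y\ll\sqrt{K/\log K}$, this is $\ll\sqrt K$ for \emph{every} $c\geq1$, whereas the order is $k-\tfrac12\asymp K$. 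The uniform bound $J_\nu(t)\ll (t/\sqrt\nu)(et/2\nu)^\nu$ from (\ref{J-Bessel}) then makes every term of the Petersson expansion smaller than any negative power of $K$, and the $c$-, $d$-, and $k$-sums can be estimated completely trivially using only the Weil-type bound (\ref{modified-Kloosterman-bound}) for $K_\kappa^+$; there are at most $O(H)$ shifts $h=d_2-d_1\neq0$ and the whole off-diagonal is negligible. This is precisely how the paper closes the argument. In short: your plan for the off-diagonal would not work as stated (there is no ``narrow surviving window'' of $c$ in which the Bessel factor oscillates), but the correct observation renders that step immediate rather than deep, so the proposition does follow once this is fixed.
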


\begin{prop}\label{Prop2}
We have
\[\#\left\{g\in\mathcal S_K:\,\#\left\{x\sim X:\,\sumflat_{x\leq (-1)^kd\leq x+H}\,\sqrt{\alpha_g}\left|c_g(|d|)\right|\geq\frac H{k^{1/2}\log X}\right\}\gg \frac X{(\log X)^{5/2}}\right\}\gg_\varepsilon\frac{K^2}{(\log K)^{3/2+\varepsilon}}\]
for any $\varepsilon>0$.
\end{prop}

\noindent Indeed, it is easy to see that the first statement implies that apart from $O(K^2/(\log K)^3)$ of the forms $g\in\mathcal S_K$ one has the property that the inequality 
\[\left|\sumflat_{x\leq (-1)^kd\leq x+H}\,\sqrt{\alpha_g}c_g(|d|)\right|<\sqrt H k^{-1/2}(\log K)^3\]
holds for almost all $x\sim X$ with the exceptional set having measure $\ll X/(\log X)^3$.

Likewise, the latter statement says that for $\gg_\varepsilon K^2/(\log K)^{3/2+\varepsilon}$ forms $g\in\mathcal S_K$ we have that 
\[\sumflat_{x\leq (-1)^kd\leq x+H}\,\sqrt{\alpha_g}\left|c_g(|d|)\right|\geq\frac H{k^{1/2}\log X}\]
for $x\gg X/(\log X)^{5/2}$ of $x\sim X$. Choosing, say, $H=(\log X)^9$ and combining the previous two observations, it follows that for $\gg_\varepsilon K^2/(\log K)^{3/2+\varepsilon}$ of the forms $g\in\mathcal S_K$ one has that for $\gg X/(\log X)^{5/2}$ of the numbers $x\sim X$ we have that 
\[ 
\left|\sumflat_{x\leq (-1)^kd\leq x+H}\sqrt{\alpha_g}\left|c_g(|d|)\right|\pm\sum_{x\leq(-1)^kd\leq x+H}\sqrt{\alpha_g}c_g(|d|)\right|\gg\frac H{\sqrt K\log X}.\]
From these inequalities it is a simple matter to deduce a sign change of $c_g(|d|)$ with $d$ an odd fundamental discriminant in the short intervals $[x,x+H]$. We refer to Section $9$ for details. This leads to $\gg X/(\log X)^{5/2}H$ sign changes for the same number of forms as above, which consequently yields the promised number of real zeroes. 

Both of the propositions above rely crucially on the sharp estimates for certain moments of quadratic twists of modular $L$-functions. The relevant results are the content of the following two lemmas.

\begin{lemma}\label{second-moment-with-average}
Let $\phi$ and $h$ be smooth compactly supported functions on $\mathbb R_+$.  Then
\[\sum_{k\in\Z}h\left(\frac{k}K\right)\sum_{g\in B^+_{k+\frac12}}\alpha_g\sumflat_d\, |c_g(|d|)|^2\phi\left(\frac{(-1)^k d}X\right)=\frac{2XK}{3\pi^2}\widehat h(0)\widehat\phi(0)+O_\eps\left(KX^{1/2+\eps}\right) \]
for any $\eps>0$. 
\end{lemma}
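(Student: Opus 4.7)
The plan is to reduce the left-hand side, via Waldspurger's formula, to a first moment of quadratic twists of modular $L$-functions, and then evaluate it through the standard approximate functional equation plus Petersson trace formula approach. The key ingredient making everything clean is the assumed range $X\leq\sqrt K$, which makes the off-diagonal Kloosterman contribution vanish superpolynomially.

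First, applying (\ref{connecting-normalisations}) and the Shimura correspondence bijection $B^+_{k+\frac12}\leftrightarrow\mathcal B_k$ (the condition $(-1)^kd>0$ is forced by the support of $\phi$), the left-hand side becomes
\[\sum_{k\in\Z}h\!\left(\frac{2k-1}{K}\right)\sumflat_d\phi\!\left(\frac{(-1)^kd}{X}\right)\sum_{f\in\mathcal B_k}\omega_f L\!\left(\tfrac12,f\otimes\chi_d\right).\]
Since $(-1)^kd>0$ forces the root number $\epsilon(f\otimes\chi_d)=(-1)^k\,\mathrm{sgn}(d)=+1$, the approximate functional equation is balanced:
\[L(\tfrac12,f\otimes\chi_d)=2\sum_{n\geq 1}\frac{\lambda_f(n)\chi_d(n)}{\sqrt n}V(n;k,d),\]
with $V$ a smooth cutoff of effective length $n\ll(k|d|)^{1+\eps}$ and $V(y)=1+O(y)$ as $y\to 0$.

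Next I would swap the order of summation and apply Petersson's trace formula at level $1$:
\[\sum_{f\in\mathcal B_k}\omega_f\lambda_f(n)=\delta_{n=1}+2\pi i^{-2k}\sum_{c\geq 1}\frac{S(1,n;c)}{c}J_{2k-1}\!\left(\frac{4\pi\sqrt n}{c}\right).\]
The off-diagonal contribution is negligible: since $n\ll(KX)^{1+\eps}$ and $X\leq\sqrt{K/\log K}$, we have $4\pi\sqrt n/c\ll K^{3/4+\eps}$, while $2k-1\asymp K$. The standard bound $J_\nu(y)\ll (ey/(2\nu))^\nu$ then yields $J_{2k-1}(4\pi\sqrt n/c)\ll (cK^{-1/4})^{2K}$, giving a superpolynomially small total contribution after any trivial summation over $c,n,d,k$.

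It remains to handle the diagonal term
\[2\sum_{k}h\!\left(\frac{2k-1}{K}\right)\sumflat_d\phi\!\left(\frac{(-1)^kd}{X}\right)V(1;k,d).\]
Writing $V(1;k,d)=1+O((k|d|)^{-1})$ contributes an error of size $O(X(\log X))$, which is absorbed. The inner squarefree sum is then evaluated by a standard M\"obius sieve: expanding $\mu^2(d)=\sum_{e^2\mid d}\mu(e)$, only odd $e$ contribute (because $d\equiv 1\pmod{16}$ is odd), and the local density at $2$ combined with $\prod_{p\text{ odd}}(1-p^{-2})=8/\pi^2$ gives $\frac{1}{16}\cdot\frac{8}{\pi^2}=\frac{1}{2\pi^2}$ for the density, so that
\[\sumflat_d\phi\!\left(\frac{(-1)^kd}{X}\right)=\frac{X\widehat\phi(0)}{2\pi^2}+O(X^{1/2+\eps}).\]
Poisson summation in $k$ gives $\sum_k h((2k-1)/K)=\tfrac{K}{2}\widehat h(0)+O_A(K^{-A})$, and multiplying one obtains the main term $2\cdot\frac{K}{2}\widehat h(0)\cdot\frac{X\widehat\phi(0)}{2\pi^2}=\frac{XK}{2\pi^2}\widehat h(0)\widehat\phi(0)$, while the sieve error summed over the $\asymp K$ values of $k$ yields $O(K^{1+\eps}X^{1/2})$, as claimed.

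The main obstacle is routine but fiddly: tracking every error term below the threshold $K^{1+\eps}X^{1/2}$. In particular one must verify that the smoothed squarefree sum truly admits a square-root saving (rather than just $X^{1/2}\log X$, which would already suffice up to the $\eps$ in the exponent), and that the truncation errors in the approximate functional equation contribute rapid decay. Apart from these bookkeeping issues, the argument is transparent precisely because $X\leq\sqrt K$ kills the Kloosterman terms outright, so no deep analytic input about sums of Kloosterman sums is required.
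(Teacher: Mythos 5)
Your proposal is correct in substance and reaches the right main term and error term, but it handles the off-diagonal by a genuinely simpler route than the paper. The paper first detects squarefreeness by M\"obius, splits the resulting $\alpha$-sum at a parameter $Z$, applies Petersson, then performs Poisson summation in $d$ (Lemma \ref{Poisson-w-char}) on \emph{both} the diagonal and the Kloosterman terms, and finally averages the Bessel function over $k$ via (\ref{J-Bessel-avg-I}) before observing that the resulting $c$-sum is essentially empty; balancing $Z=X^{1/2}$ produces the error $K^{1+\eps}X^{1/2}$. You instead exploit that for each individual $k$ the argument $4\pi\sqrt m/c\ll(KX)^{1/2+\eps}\ll K^{1-\delta}$ lies far below the transition range of $J_{2k-1}$, so (\ref{J-Bessel}) kills the off-diagonal superpolynomially without any $d$- or $k$-averaging; the paper itself concedes in Section 2 that this lemma "can be established without the additional $k$-average." Your diagonal treatment (direct M\"obius sieve for squarefree $d\equiv 1\pmod{16}$ with square-root error, times Poisson in $k$) is equivalent to the paper's $\ell=0$ versus $\ell\neq0$ analysis and gives the same constant $\frac1{16}\cdot\frac8{\pi^2}=\frac1{2\pi^2}$. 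Two small points to fix in the write-up: (i) the claim $V_k(1/|d|)=1+O((k|d|)^{-1})$ is too strong --- the paper's Lemma \ref{lem:AFE} gives $V_k(\xi)=1+O(\xi^{1/2-o(1)})$, hence an error $O(|d|^{-1/2+o(1)})$, but its total contribution $\ll K^{1+\eps}X^{1/2}$ is still absorbed, so nothing breaks; (ii) the restriction $X\ll\sqrt K$ (or $X\ll K^{1-\delta}$) is not in the lemma statement, but the paper's own proof also invokes $X\ll\sqrt K$, and the application range $X=K/Y\leq\sqrt{K/\log K}$ satisfies it, so your use of it is legitimate.
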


\begin{lemma}\label{fourth-moment-with-average}
Let $\phi$ be a smooth compactly supported function on $\mathbb R_+$. Then
\begin{align*}
&\sum_{k\sim K}\sum_{g\in B^+_{k+\frac12}}\alpha_g^2\omega_g^{-1}\sumflat_{d}\, |c_g(|d|)|^4\phi\left(\frac{(-1)^k d}X\right)\ll XK\log(XK),
\end{align*}
where the implicit constant depends on the weight functions $h$ and $\phi$.
\end{lemma}


\noindent In the latter lemma and throughout the paper we write as an abuse of notation $\omega_g$ for $\omega_f$ when $f$ is the Shimura lift of $g$. 

Proofs of these results are reasonably standard. The first asymptotic formula is based on the half-integral weight variant of the Petersson formula for forms in the Kohnen plus subspace whereas the latter is a consequence of a large sieve inequality of Deshouillers and Iwaniec \cite{DI1982}. We remark that in the proof of Lemma \ref{second-moment-with-average} the summation over $k$ plays no essential role and neither does the summation over the fundamental discriminants in the proof of Lemma \ref{fourth-moment-with-average}. Actually with more work one could also evaluate the fourth moment in Lemma \ref{fourth-moment-with-average} asymptotically with main term $c\cdot XK\log(XK)$, where the constant $c$ depends only on the weight function $h$. It is worth mentioning here that in such computation a nice structural feature that the off-diagonal contribution arising from an application of Petersson's formula cancels part of the diagonal contribution from the application of the same formula is present. This gives a new instance of a similar phenomenon appearing in some prior works \cite{Blomer2004, Blomer2008, Khan2010}.

A few remarks are in order concerning the various averages in the preceding two lemmas and in particular to explain why they are required. Ideally we would like to evaluate\footnote{When $j=4$ an upper bound of the right order of magnitude suffices for our purposes, but here we focus on the asymptotic evaluation as this would be needed for the incorporation of a mollifier to get a result that holds for a positive proportion of forms.} the moments 
\begin{align*}
\sumflat_{(-1)^k d\sim X}|c_g(|d|)|^j
\end{align*}
for $j\in\{2,4\}$ uniformly in terms of the weight $k+\frac12$ of $g$. By Waldspurger's formula these reduce to asymptotic evaluation of averaged first and second moments of $L(1/2,f\otimes \chi_{d})$ with uniformity in $k$. More precisely, we are required to understand the asymptotic behaviour of the sums
\begin{align}\label{moments}
\sumflat_{(-1)^k d\sim X}L\left(\frac12,f\otimes\chi_{d}\right) \qquad\text{and}\qquad \sumflat_{(-1)^k d\sim X}L\left(\frac12,f\otimes\chi_{d}\right)^2,
\end{align}
where $f\in S_{2k}(1)$ is the Shimura lift of $g$.

Recall that the complexity of a moment problem is measured by the ratio between logarithm of the analytic conductor and logarithm of the family size. Denote this ratio by $r$. The situations where $r=4$ is the edge of current technology where one can hope to obtain an asymptotic formula with a power saving error term. However, usually in this case we often barely fail to produce such asymptotics and a deep input is typically required in the rare cases when an asymptotic formula can be obtained.  Note that in the latter sum in (\ref{moments}) the ratio between the logarithm of the conductor and the logarithm of the family size is greater than four. Moreover, in our applications the parameter $X$ will be small, $X\ll\sqrt K$. In this situation the summation range is too short to evaluate even the first moment.

However, one can remedy the situation by introducing additional averages. Averaging over $g\in B_{k+\frac12}^+$ brings us to the situation where the ratio $r$ is precisely $4$ in the second moment problem and in this case it is plausible that the methods from Li's recent breakthrough \cite{Li2024} (which builds upon \cite{Soundararajan-Young2010}) can be adapted to our setting to yield the expected asymptotics, but saving only a power of a logarithm in the error term. Moreover, a direct adaptation of this method cannot handle the introduction of a mollifier of length $\gg X^\eps$. For all these reasons we have added one more averaging over the weights, which brings us to the situation where $2<r<3$. 

\subsection{Possible extensions} 
It is natural to wonder whether we can improve the result of Theorem \ref{main-theorem} to hold for a positive proportion of forms using a mollifier. By Waldspurger's formula a natural choice for the mollifier to study sign changes of $c_g(|d|)$ would be a quantity $\mathcal M_g(d)$, which is a truncated Dirichlet series approximation for $L(1/2,f\otimes\chi_d)^{-1/2}$. A traditional approach would be to start by letting the mollifying factor for $c_g(|d|)$ to be of the form
\[ 
\left(\sum_{\ell\leq L}\frac{x_\ell \mu(\ell)\lambda_f(\ell)\chi_d(\ell)}{\sqrt \ell}\right)^2\]
for some real coefficients $x_\ell$ to guarantee its positivity which is crucial for detecting sign changes. For this mollifier we would need to choose the coefficients $x_\ell$ so that
\[ 
\sum_{\ell\leq L}\frac{x_\ell \mu(\ell)\lambda_f(\ell)\chi_d(\ell)}{\sqrt \ell}\approx L\left(\frac12,f\otimes\chi_d\right)^{-1/4},\]
but unfortunately it seems unclear how to do this effectively. Moreover, a crucial step in the proof of Proposition \ref{Prop1} would involve understanding sums of the form
\begin{align}\label{g-av}
\sum_{g\in B_{k+\frac12}^+}\alpha_g c_g(|d|)c_g(|d|+h)\mathcal M_g(d)\mathcal M_g(d+h).
\end{align}
In this case, after several applications of Hecke relations, studying (\ref{g-av}) reduces to understanding the sums
\[ 
\sum_{g\in B_{k+\frac12}^+}\alpha_g c_g(|d|)c_g(|d|+h)\lambda_f(\ell).\]  This sum can be made amenable for Lemma \ref{half-int-Petersson} by inverting the relation (\ref{Fourier-kertoimien-yhteys}) to write $c_g(|d|)\lambda_f(\ell)$ as a linear combination of Fourier coefficients of $g$ at various arguments (at least for squarefree $\ell$). On the other hand, now evaluating the mollified moments 
\begin{align*}
&\sum_{k\in\Z}h\left(\frac{k}K\right)\sum_{g\in B^+_{k+\frac12}}\alpha_g\sumflat_{d}\, |c_g(|d|)|^2\mathcal M_{g}(d)^2\phi\left(\frac{(-1)^k d}X\right), \\
& \sum_{k\in\Z}h\left(\frac{k}K\right)\sum_{g\in B^+_{k+\frac12}}\alpha_g^2\omega_g^{-1}\sumflat_{d}\, |c_g(|d|)|^4\mathcal M_{g}(d)^4\phi\left(\frac{(-1)^k d}X\right),
\end{align*}
and optimising them becomes technically much more challenging compared to the corresponding moments with the Iwaniec--Sarnak mollifier $\mathcal M_{f,d}$ \cite{Iwaniec-Sarnak2000}.

Related to this, again by a repeated application of Hecke relations, a key step towards evaluating these mollified moments is to evaluating the twisted analogues of the sums appearing in Lemmas \ref{second-moment-with-average} and \ref{fourth-moment-with-average}. Indeed, with some additional work one may show that there exists an absolute constant $c>0$ so that uniformity in $\ell_1,\,\ell_2\leq K^c$ one can obtain asymptotics for 
\begin{align*}
\sum_{k\in\Z}h\left(\frac{k}K\right)\sum_{g\in B^+_{k+\frac12}}\alpha_g\sumflat_d\, |c_g(|d|)|^2\lambda_f(\ell_1)\chi_d(\ell_2)\phi\left(\frac{(-1)^k d}X\right)
\end{align*}
and 
\begin{align*}
\sum_{k\in\Z}h\left(\frac{k}K\right)\sum_{g\in B^+_{k+\frac12}}\alpha_g^2\omega_g^{-1}\sumflat_{d}\, |c_g(|d|)|^4\lambda_f(\ell_1)\chi_d(\ell_2)\phi\left(\frac{(-1)^k d}X\right).
\end{align*}
However, it is hard to benefit from this due to lack of understanding on how choose the coefficients $x_\ell$ optimally in the mollifier. 

A more promising approach, originating from \cite{Soundararajan2009, Harper2013, Radziwill-Soundararajan2015}, would be to use an Euler product mollifier as in the work of Lester and Radziwi{\l\l} \cite{Lester-Radziwill2021}. This leads to major technical complications when computing the fourth moment of the Fourier coefficients. Nevertheless, it is likely that this approach can be pushed through and we will investigate this possibility in a future work\footnote{After the present manuscript was accepted for publication, a result that holds for a positive proportion of forms has been obtained by the author \cite{Jaasaari2026} using an Euler product mollifier.}. We emphasise that the purpose of the present article is to demonstrate that one is able to deduce highly non-trivial information about the zeroes of half-integral weight Hecke cusp forms using methods very different from those used in the integral weight case that do not produce strong results in the present setting.  

\subsection{Organisation of the article}

This paper is organised as follows. In Section $5$ we gather basic facts about half-integral weight modular forms and other auxiliary results we need. In Section $6$ we prove a result that reduces the question of finding real zeroes to studying sign changes of the Fourier coefficients. Lemmas \ref{second-moment-with-average} and \ref{fourth-moment-with-average} are proved in Sections $7$ and $8$, respectively. These are then used in Section $9$ to prove Propositions \ref{Prop1} and \ref{Prop2} from which Theorem \ref{main-theorem} is deduced in the same section. In Section $10$ Proposition \ref{non-vanishing} is proved and the proof of Theorem \ref{second-main-theorem} is completed by studying sign changes of Fourier coefficients using ideas from multiplicative number theory. 

\section{Acknowledgements}

\noindent This work was supported by the Emil Aaltonen Foundation, the Finnish Cultural Foundation, and the Engineering and Physical Sciences Research Council [grant number EP/T028343/1]. The author would like to thank Steve Lester and Kaisa Matom\"aki for useful discussions. He is also grateful to the anonymous referee for a careful reading of the paper and for numerous helpful comments and corrections that improved the presentation of this work. A part of this paper was written while the author was visiting the Mittag-Leffler Institute in Djursholm, Sweden, whose excellent working conditions are acknowledged. 

\section{Notations}

\noindent We use standard asymptotic notation. If $f$ and $g$ are complex-valued functions defined on some set, say $\mathcal D$, then we write $f\ll g$ to  signify that $|f(x)|\leqslant C|g(x)|$ for all $x\in\mathcal D$ for some implicit constant $C\in\mathbb R_+$. The notation $O(g)$ denotes a quantity that is $\ll g$, and $f\asymp g$ means that both $f\ll g$ and $g\ll f$. We write $f=o(g)$ if $g$ never vanishes in $\mathcal D$ and $f(x)/g(x)\longrightarrow 0$ as $x\longrightarrow\infty$. Moreover, we write $f\sim g$ if $f(x)/g(x)\longrightarrow 1$ as $x\longrightarrow\infty$. The letter $\varepsilon$ denotes a positive real number, whose value can be fixed to be arbitrarily small, and whose value can be different in different instances in a proof.  All implicit constants are allowed to depend on $\varepsilon$, on the implicit constants appearing in the assumptions of theorem statements, and on anything that has been fixed. When necessary, we will use subscripts $\ll_{\alpha,\beta,...},O_{\alpha,\beta,...}$, etc. to indicate when implicit constants are allowed to depend on quantities $\alpha,\beta,...$

We define $\chi_d(\cdot):=\left(\frac d\cdot\right)$, the Jacobi symbol, for all non-zero odd integers $d$. Let us also write $1_{m=n}$ for the characteristic function of the event $m=n$. Furthermore, $\Re(s)$ and $\Im(s)$ are the real- and imaginary parts of $s\in\mathbb C$, respectively, and occasionally we write $\sigma$ for $\Re(s)$. We write $e(x):=e^{2\pi ix}$. For a compactly supported smooth function $\phi$, we define its Fourier transform $\widehat\phi(y)$ by
\[\widehat\phi(y):=\int\limits_\mathbb R \phi(x)e(-xy)\,\mathrm d x. \]
The sum $\sum_{a\,(c)}^*$ means that the summation is over residue classes coprime to the modulus. Given coprime integers $a$ and $c$, we write $\overline a\,(\text{mod } c)$ for the multiplicative inverse of $a$ modulo $c$. As usual, $\Gamma$ denotes the Gamma function and $\mu$ denotes the M\"obius function. Finally, $\sumflat\,\,$ means we are summing over all odd fundamental discriminants. 

\section{Preliminaries}

\subsection{Half-integral weight forms}

\noindent The group $\mathrm{SL}_2(\mathbb R)$ acts on the upper half-plane $\mathbb H$ by $\gamma.z:=\frac{az+b}{cz+d}$, where $\gamma=\begin{pmatrix} a & b\\
c & d
\end{pmatrix}$ and $z=x+iy\in\C$. Let $\Gamma_0(4)$ denote the congruence subgroup consisting of matrices $\begin{pmatrix} a & b\\
c & d
\end{pmatrix}$ in $\mathrm{SL}_2(\mathbb Z)$ such that $c$ is divisible by $4$. 

Let $\theta(z):=\sum_{n=-\infty}^\infty e(n^2z)$ denote the standard theta function on $\mathbb H$. If $A=\begin{pmatrix} a & b\\
c & d
\end{pmatrix}\in\Gamma_0(4)$, we have $\theta(Az)=j(A,z)\theta(z)$, where $j(A,z)$ is the so-called theta multiplier. For an explicit formula for $j(A,z)$, see \cite[1.10]{Shimura1973}. Fix a positive integer $k$. Let $S_{k+\frac12}(4)$ denote the space of holomorphic cusp forms of weight $k+\frac12$ for the group $\Gamma_0(4)$. This means that a function $g:\mathbb H\longrightarrow\mathbb C$ belongs to $S_{k+\frac12}(4)$ if 
\begin{itemize}
\item $g(Az)=j(A,z)^{2k+1}g(z)$ for every $A\in\Gamma_0(4)$.
\item $g$ is holomorphic.
\item $g$ vanishes at the cusps. 
\end{itemize} 
Any such form $g$ has a Fourier expansion of the form
\begin{align}\label{Fourier-exp}
g(z)=\sum_{n=1}^\infty c_g(n)n^{\frac k2-\frac14}e(nz),
\end{align}
where $c_g(n)$ are the Fourier coefficients of $g$.

For $g,h\in S_{k+\frac12}(4)$, we define the Petersson inner product $\langle g,h\rangle$ to be
\begin{align}\label{Inner_product}
\langle g,h\rangle:=\int\limits_{\Gamma_0(4)\backslash\mathbb H}g(z)\overline h(z)y^{k+\frac12}\frac{\mathrm d x\,\mathrm d y}{y^2}.
\end{align}

\noindent For any odd prime $p$ there exists a Hecke operator $T(p^2)$ acting on the space of half-integral weight modular forms given by 
\[T(p^2)g(z):=\sum_{n=1}^\infty\left(c_g(p^2n)+\left(\frac{(-1)^k n}p\right)p^{k-1}c_g(n)+p^{2k-1}c_g\left(\frac n{p^2}\right)\right)e(nz).\]
Here we have used the convention that $c_g(x)=0$ unless $x\in\mathbb Z$. We call a half-integral weight cusp form a Hecke cusp form if $T(p^2)g=\gamma_g(p)g$ for all $p>2$ for some $\gamma_g(p)\in\mathbb C$. 

The Kohnen plus subspace $S_{k+\frac12}^+(4)\subset S_{k+\frac12}(4)$ consists of all weight $k+\frac12$ Hecke cusp forms whose $n^{\text{th}}$ Fourier coefficient vanishes whenever $(-1)^k n\equiv 2,3\,(\text{mod }4)$. This space has a basis consisting of simultaneous eigenfunctions of $T(p^2)$ for odd $p$. As $k\longrightarrow\infty$, asymptotically one third of half-integral weight cusp forms lie in the Kohnen plus space by dimension considerations. In this space Shimura's correspondence \cite{Shimura1973} between half-integral weight forms and integral weight forms is well-understood. 

Indeed, Kohnen proved \cite{Kohnen1982} that there exists a Hecke algebra isomorphism between $S^+_{k+\frac12}(4)$ and the space of level $1$ cusp forms of weight $2k$. That is, $S^+_{k+\frac12}(4)\simeq S_{2k}(1)$ as Hecke modules. Also recall that every Hecke cusp form $g\in S^+_{k+\frac12}(4)$ can be normalised so that it has real Fourier coefficients and throughout the article we assume that $g$ has been normalised in this way.

For a fundamental discriminant $d$ with $(-1)^kd>0$ we know that
\begin{align}\label{Fourier-kertoimien-yhteys}
c_g(n^2|d|)=c_g(|d|)\sum_{r|n}\frac{\mu(r)\chi_d(r)}{\sqrt r}\lambda_f\left(\frac nr\right),
\end{align}
where $\lambda_f(n)$ denotes the $n^{\text{th}}$ Hecke eigenvalue of the Shimura lift $f$ (see equation $(2)$ of \cite{Kohnen-Zagier1981}). In
particular, if $n$ is a prime $p$ this reduces to (\ref{Fourier-coeff-rel}).

The proof of our first main result uses the explicit form of Waldspurger's formula due to Kohnen and Zagier \cite{Kohnen-Zagier1981}.

\begin{lemma} 
For a Hecke cusp form $g\in S^+_{k+\frac12}(4)$ we have
\begin{align}\label{Waldspurger-rel}
|c_g(|d|)|^2=L\left(\frac12,f\otimes\chi_d\right)\cdot\frac{(k-1)!}{\pi^k}\cdot\frac{\langle g,g\rangle}{\langle f,f\rangle}
\end{align}
for each fundamental discriminant $d$ with $(-1)^kd>0$, where $f$ is a holomorphic modular form attached to $g$ via the Shimura correspondence, normalised so that $\lambda_f(1)=1$. 
\end{lemma}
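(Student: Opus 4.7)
The plan is to derive the identity as an immediate consequence of the main theorem of Kohnen--Zagier \cite{Kohnen-Zagier1981}; the only work to be carried out here is to reconcile their normalisation of the Fourier coefficients of $g$ with the one fixed in \eqref{Fourier-exp}. Let us temporarily write $a_g(n)$ for the ``unnormalised'' Fourier coefficient, so that Kohnen--Zagier's identity reads
\[
\frac{|a_g(|d|)|^2}{\langle g,g\rangle}\;=\;\frac{(k-1)!}{\pi^k}\,|d|^{k-1/2}\,\frac{L\left(\frac12,f\otimes\chi_d\right)}{\langle f,f\rangle}
\]
for any fundamental discriminant $d$ with $(-1)^k d>0$, whenever $g\in S^+_{k+1/2}(4)$ is a Hecke eigenform and $f$ is its Shimura lift with $\lambda_f(1)=1$.

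Comparing \eqref{Fourier-exp} with the conventional Fourier expansion $g(z)=\sum_n a_g(n)e(nz)$ gives $a_g(n)=c_g(n)\,n^{k/2-1/4}$, so
\[
|a_g(|d|)|^2\;=\;|c_g(|d|)|^2\cdot |d|^{k-1/2}.
\]
Substituting this into the displayed identity above, the factor $|d|^{k-1/2}$ on either side cancels exactly, and the claim follows at once. Thus the proof is purely a matter of bookkeeping once the external input is cited.

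For completeness, the depth of the statement lies entirely in Kohnen--Zagier's theorem itself, whose proof rests on the construction of a two-variable theta kernel realising the Shintani lift from $H_{2k}(1)$ to $S^+_{k+1/2}(4)$. Pairing this kernel against $g$ via the Petersson inner product isolates $|a_g(|d|)|^2$ (times a ratio of norms), while a Rankin--Selberg unfolding of the same kernel against $f$ identifies the pairing with the twisted central value $L(1/2,f\otimes\chi_d)$; tracking all constants through the two computations produces the explicit factor $(k-1)!/\pi^k$. The main obstacle, should one wish to reproduce this argument from scratch rather than quote it, would be keeping track of these explicit constants through the Jacobi-form / theta-kernel computation --- but since the formula in the required form is exactly the one proved in \cite{Kohnen-Zagier1981}, no such reproduction is necessary here.
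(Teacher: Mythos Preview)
Your proposal is correct and matches the paper's approach exactly: the paper simply states the lemma as the explicit Waldspurger formula of Kohnen--Zagier \cite{Kohnen-Zagier1981} without giving a proof, so your derivation---quoting their theorem and carrying out the normalisation conversion $a_g(n)=c_g(n)n^{k/2-1/4}$ between the unnormalised coefficients and those in \eqref{Fourier-exp}---is precisely the intended justification, with the bookkeeping made explicit.
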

Here for $f_1,f_2\in S_{2k}(1)$ the Petersson inner product, which is still denoted by $\langle f_1,f_2\rangle$, is defined to be
\[ 
\langle f_1,f_2\rangle:=\int\limits_{\mathrm{SL}_2(\mathbb Z)\backslash\mathbb H}f_1(z)\overline{f_2}(z)y^{2k}\frac{\mathrm d x\,\mathrm d y}{y^2}.\]
We write $\|f\|_2^2:=\langle f,f\rangle$.


We also remark that $L(1/2,f\otimes\chi_d)$ vanishes when $(-1)^kd<0$ due to the sign in the functional equation. It follows directly from (\ref{Waldspurger-rel}) that $L(1/2,f\otimes\chi_d)\geq 0$ otherwise. To verify the formula (\ref{connecting-normalisations}) it suffices the show that 
\begin{align}\label{omega_alpha} 
\frac{\Gamma\left(k-\frac12\right)}{2(4\pi)^{k-1/2}\langle g,g\rangle}\cdot\frac{(k-1)!}{\pi^k}\cdot\frac{\langle g,g\rangle}{\langle f,f\rangle}=\frac{2\pi^2}{2k-1}\cdot\frac1{L(1,\text{sym}^2f)}.
\end{align}
To see this, we use a well-known identity (which follows from \cite[(7)]{Nelson2011} and Legendre's duplication formula)
\[ 
\langle f,f\rangle=\frac{\Gamma(k)\Gamma\left(k-\frac12\right)(2k-1)}{2^{2k+1}\pi^{2k+3/2}}\cdot L(1,\text{sym}^2f)\]
from which the assertion follows immediately. 

\subsection{Tools from the theory of automorphic forms}
Our first tool is an approximate functional equation for $L(1/2,f\otimes\chi_d)$. The following is an easy modification of \cite[Lemma 5]{Radziwill-Soundararajan2015}. 
\begin{lemma}\label{lem:AFE}
Let $f$ be a holomorphic Hecke cusp form of weight $2k$ for the full modular group $\mathrm{SL}_2(\mathbb Z)$ and $d$ be a fundamental discriminant with $(-1)^kd>0$. Then
\begin{align*}
L\left(\frac12,f\otimes\chi_d\right)=2\sum_{m=1}^\infty\frac{\lambda_f(m)\chi_d(m)}{\sqrt m}V_k\left(\frac m{|d|}\right),
\end{align*}
where, for any $\sigma>1/2$,
\begin{align}\label{integral-rep-for-weight}
V_k(x):=\frac1{2\pi i}\int\limits_{(\sigma)}g(s)x^{-s}e^{s^2}\frac{\mathrm d s}s\quad\text{with}\quad g(s):=(2\pi)^{-s}\frac{\Gamma(s+k)}{\Gamma(k)}.
\end{align}
Furthermore, we have 
\begin{align*}\label{weight-function-asymptotics}
V_k(\xi)=1+O\left(\frac\xi k\right)
\end{align*}
as $\xi\longrightarrow 0$.

We also have the estimates
\begin{align*}
& V_k(\xi)\ll_A \left(\frac k\xi\right)^A, \\
&V_k^{(B)}(\xi)\ll_{A,B}\xi^{-B}\left(\frac k\xi\right)^A
\end{align*}
for any $A>0$ and integer $B\geq 0$.
\end{lemma}


\noindent One of the most important tools is a half-integral weight analogue for the Petersson formula. The following result for forms in the Kohnen plus subspace is \cite[Lemma 6]{Blomer-Corbett2022} with a slightly different normalisation. Recall the definition of the normalising factor $\alpha_g$ from (\ref{Normalisation}). 

\begin{lemma}\label{half-int-Petersson}
Let $k\geq 3$ be an integer. Let $m,n$ be positive integers with $(-1)^km,\,(-1)^kn\equiv 0,\,1\,(\text{mod }4)$. Then 
\begin{align*}
\sum_{g\in B_{k+\frac12}^+}\alpha_g c_g(m)c_g(n)=\frac13\left(1_{m=n}+2\pi e\left(-\frac{k+\frac12}4\right)\sum_c\frac{K_{k+\frac12}^+(m,n;c)}c J_{k-\frac12}\left(\frac{4\pi\sqrt{mn}}c\right)\right),
\end{align*} 
where for $m,\,n\in\Z,\,c\in\N$, and $\kappa\in\frac12\Z$ we define the modified Kloosterman sum as
\begin{align*}
K_\kappa^+(m,n;c):=\sump_{d\,(c)}\epsilon_d^{2\kappa}\left(\frac cd\right)e\left(\frac{md+n\overline d}c\right)\cdot\begin{cases}
0 & \text{if }\, 4\nmid c \\
2 & \text{if }\, 4|c,\,8\nmid c \\
1 & \text{if }\, 8|c.
\end{cases} 
\end{align*}
Here 
\[
\epsilon_d:=\begin{cases}
1 & \text{if }d\equiv 1\,(4) \\
i & \text{if }d\equiv 3\,(4).
\end{cases}
\]
\end{lemma}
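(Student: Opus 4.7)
The plan is to derive the formula by the classical Poincaré series method, adapted to the Kohnen plus subspace, as is done in \cite{Blomer-Corbett2022}. The three ingredients are (i) a Poincaré series $P_m \in S_{k+\frac12}(4)$ whose Petersson inner product with any $h \in S_{k+\frac12}(4)$ reproduces $c_h(m)$ up to an explicit gamma factor; (ii) the Kohnen projector $\mathrm{pr}^+$, written as an explicit averaging over cosets at the prime $2$; and (iii) Shimura's explicit formula for the theta multiplier $j(\gamma, z)$ on $\Gamma_0(4)$, which will be needed when running the Bruhat decomposition.

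First I would define, for $m$ with $(-1)^k m \equiv 0, 1 \pmod 4$, the $m$-th Poincaré series
\[P_m(z) := \sum_{\gamma \in \Gamma_\infty \backslash \Gamma_0(4)} j(\gamma, z)^{-(2k+1)} e(m \gamma z),\]
and verify, via the standard unfolding against any $h \in S_{k+\frac12}(4)$ with its Fourier expansion \eqref{Fourier-exp}, that $\langle h, P_m \rangle$ equals $c_h(m) m^{-(k-1/2)}$ times an explicit gamma factor. Applying the Kohnen projection gives $P_m^+ := \mathrm{pr}^+ P_m \in S_{k+\frac12}^+(4)$, and by the self-adjointness of $\mathrm{pr}^+$ one has $\langle g, P_m^+ \rangle = \langle g, P_m \rangle$ for every $g \in S_{k+\frac12}^+(4)$. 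The sum $\sum_g \alpha_g c_g(m) c_g(n)$ in the lemma would then be interpreted by Parseval, applied to the orthonormal basis $\{ g / \|g\|\}_{g \in B_{k+\frac12}^+}$, as $\langle P_m^+, P_n^+ \rangle$ up to the normalising constants packaged into $\alpha_g$ via \eqref{Normalisation}.

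Next I would compute this inner product by unfolding one of the Poincaré series against the summation defining the other. The identity coset in the Bruhat decomposition of $\Gamma_0(4)$ contributes the diagonal term $1_{m=n}$, multiplied by the overall factor $\tfrac{2}{3}$ coming from the Kohnen projector. The non-identity cosets are parametrised by the lower-left entry $c$ of the coset representative (with $4 \mid c$ forced by the level); for each such $c$ they produce an exponential sum in $d \pmod c$ weighted by the theta multiplier, which collapses via Shimura's formula to $\epsilon_d^{2\kappa} \bigl( \tfrac{c}{d} \bigr)$. The archimedean integral, evaluated by the Iwasawa decomposition, yields the Bessel function $J_{k-\frac12}(4\pi\sqrt{mn}/c)$ via the standard integral representation of $J_\nu$, and the factor $e(-\kappa/4)$ appears from the stationary-phase style evaluation at infinity.

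The main obstacle, and by far the most delicate step, is tracking the Kohnen projector through this unfolding. Because $\mathrm{pr}^+$ mixes cosets at the prime $2$, the multiplicity with which each $c$ appears depends on $c \pmod 8$: those $c$ with $4 \nmid c$ drop out entirely, those with $4 \mid c$ and $8 \nmid c$ receive weight $2$, and those with $8 \mid c$ receive weight $1$, which is precisely the case distinction in the definition of $K_\kappa^+(m, n; c)$. Extracting this distinction correctly, and verifying that the overall constant $\tfrac{2}{3}$ and the phase $e(-\kappa/4)$ emerge with the right normalisation and sign, requires a careful local computation at the prime $2$ involving the Gauss sums that arise from reducing the theta multiplier modulo $8$. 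Once the local analysis is in place, the rest of the argument is a routine global assembly of the Petersson trace computation for $\Gamma_0(4)$.
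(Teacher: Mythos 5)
The paper does not actually prove this lemma: it quotes it verbatim as Lemma~6 of Blomer--Corbett \cite{Blomer-Corbett2022}, which in turn rests on Kohnen's classical computation of the Fourier coefficients of the projected Poincar\'e series $\mathrm{pr}^+P_m$. Your outline follows exactly that standard route (Poincar\'e series, unfolding, self-adjointness and idempotency of $\mathrm{pr}^+$, spectral expansion on one side versus a direct Fourier/Bruhat computation on the other), so the strategy is sound and coincides with the proof in the cited source.

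The gap is that essentially all of the content of the lemma is concentrated in the step you defer. The un-projected Poincar\'e series on $\Gamma_0(4)$ already yields $1_{m=n}$ plus a sum over $4\mid c$ of Sali\'e-type sums against $J_{k-\frac12}$; what the lemma asserts beyond this is precisely the effect of $\mathrm{pr}^+$, namely the weights $0,2,1$ according to $c\pmod 8$, the global constant $\tfrac23$, and the phase $e(-\kappa/4)$. Your proposal names this as ``a careful local computation at the prime $2$'' and then declares the remainder routine, but without writing $\mathrm{pr}^+$ explicitly as Kohnen's linear combination of the identity and a $U_4$-type coset average, and pushing that operator through the Fourier expansion of $P_m$, none of these constants is actually derived. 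One detail of your sketch is also not quite right as stated: the $\tfrac23$ in front of $1_{m=n}$ is not the identity coset ``multiplied by the overall factor $\tfrac23$ coming from the projector'' --- the projector is not a scalar, and the non-identity part of $\mathrm{pr}^+$ contributes a further $\tfrac13\cdot 1_{m=n}$ to the diagonal, the coefficient $\tfrac23$ emerging only after the two contributions are combined. As a blind attempt this is therefore an accurate plan rather than a proof; to be complete it must either carry out Kohnen's Proposition-4-type computation or cite it, as the paper does.
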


\noindent Note that the sum $K_\kappa^+$ is $2$-periodic in $\kappa$ and it satisfies a Weil-type bound (for this see \cite{Waibel2018}, and for the general theory see \cite[Chapter 11]{Iwaniec-Kowalski2004})
\begin{align}\label{modified-Kloosterman-bound}
|K_\kappa^+(m,n;c)|\leq d(c)(m,n,c)^{1/2}c^{1/2},
\end{align}
where $d(c)$ is the ordinary divisor function. 

The large sieve is a key tool in the proof of Lemma \ref{fourth-moment-with-average}. We employ the following variant due to Deshouillers and Iwaniec.

\begin{lemma}\label{large-sieve}
Let $K\geq 1$, $N\geq 1/2$, and let $\{a_n\}$ be any sequence of complex numbers. Then we have
\[ 
\sum_{k\sim K}\sum_{f\in\mathcal B_k}\omega_f\left|\sum_{n\sim N}a_n\lambda_f(n)\right|^2\ll_\varepsilon\left(K+K^{-1}N^{1+\eps}\right)\sum_{n\sim N}|a_n|^2\]
for any $\varepsilon>0$.
\end{lemma} 
\noindent This follows from \cite[Theorem 2]{DI1982} when taking the different normalisation into account. To check that the bound above is consistent with the one in \cite{DI1982}, note that using the well-known identity
\[ 
\Gamma\left(k-\frac12\right)=\frac{(2k-2)!}{4^{k-1}(k-1)!}\cdot\sqrt\pi\]
and (\ref{omega_alpha}) we have
\begin{align*}
\omega_f&=\frac{\Gamma\left(k-\frac12\right)}{2(4\pi)^{k-1/2}\|f\|_2^2}\cdot\frac{(k-1)!}{\pi^k}\\
&=\frac{(2k-2)!}{4^{k-1}(k-1)!}\cdot\sqrt\pi\cdot\frac{(k-1)!}{2(4\pi)^{k-1/2}\pi^k\|f\|_2^2}\\
&=\frac{(2k-2)!}{(4\pi)^{2k-1}\|f\|_2^2},
\end{align*}
which shows that $(2k-1)\omega_f$ corresponds to the normalising factor in \cite[(1.28)]{DI1982} (when also taking into account the normalisation by $\|f\|_2^2$ which arises as in \cite{DI1982} the Hecke eigenbasis is orthonormal whereas in our case it is just orthogonal).

We make use of another large sieve inequality for the Fourier coefficients $\lambda_f(n)$. The following is a special case of \cite[Theorem 1]{Lau-Wu2008}.

\begin{lemma}\label{large-sieve-2}
Let $v\geq 1$ be a fixed integer and $2\leq P<Q\leq 2P$. Then
\[ 
\sum_{f\in \mathcal B_k}\left|\sum_{P<p\leq Q}\frac{\lambda_f(p^v)}p\right|^2\ll_v k\frac1{P\log P}+k^{10/11}\frac{Q^{v/5}}{(\log P)^2}.\]
\end{lemma}

\noindent We also recall the following result of Murty and Sinha \cite[Theorem 2]{Murty-Sinha2009}.

\begin{lemma}\label{Murty-Sinha}
Let $p$ be a prime. Then for any interval $[\alpha,\beta]\subset[-2,2]$ we have
\[ 
\frac{\#\{f\in \mathcal B_k:\,\lambda_f(p)\in[\alpha,\beta]\}}{\#\mathcal B_k}=\int\limits_\alpha^\beta\mathrm d \mu_p+O\left(\frac{\log p}{\log k}\right),\]
where
\[\mathrm d \mu_p:=\frac{p+1}\pi\cdot\frac{(1-x^2/4)^{1/2}}{\left(p^{1/2}+p^{-1/2}\right)^2-x^2}\,\mathrm d x.\]
\end{lemma}

\subsection{Other tools}

\noindent We also record the following well-known uniform estimate for the $J$-Bessel function \cite[(2.11'')]{Iwaniec-Luo-Sarnak2000}. For $\nu\geq 0$ and $x>0$, the $J_\nu$-Bessel function satisfies the bound 
\begin{align}\label{J-Bessel}
J_\nu(x)\ll\frac x{\sqrt{\nu+1}}\left(\frac{ex}{2\nu+1}\right)^\nu.
\end{align}

\noindent Another key tool is the Poisson summation formula.

\begin{lemma}\label{Poisson}
Let $f$ be a Schwartz function and $a$ be a residue class modulo $c$. Then
\[\sum_{n\equiv\,a\,(c)}f(n)=\frac1c\sum_n\widehat f\left(\frac nc\right)e\left(\frac{an}c\right),\]
where $\widehat f$ denotes the Fourier transform of $f$. Note that this reduces to the classical Poisson summation formula when $c=1$. 
\end{lemma}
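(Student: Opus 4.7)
The plan is to reduce the arithmetic-progression version to the classical Poisson summation formula by a linear change of variable. Concretely, I would first introduce the auxiliary function $g(x) := f(cx+a)$, which is Schwartz because $f$ is Schwartz and $x \mapsto cx+a$ is an affine bijection of $\R$. The sum on the left-hand side is just
\[
\sum_{n \equiv a\,(\mathrm{mod}\ c)} f(n) \;=\; \sum_{m \in \Z} f(cm+a) \;=\; \sum_{m \in \Z} g(m),
\]
so the classical Poisson summation formula (the $c=1$ case, which we may take as given) converts this into $\sum_{m} \widehat g(m)$, and the problem is reduced to computing $\widehat g$ in terms of $\widehat f$.

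For the second step, I would compute $\widehat g$ directly from the definition. Performing the substitution $u = cx+a$, so that $\mathrm d u = c\, \mathrm d x$, gives
\[
\widehat g(y) \;=\; \int\limits_\R f(cx+a)\, e(-xy)\, \mathrm d x \;=\; \frac{1}{c}\int\limits_\R f(u)\, e\!\left(-\frac{(u-a)y}{c}\right)\, \mathrm d u \;=\; \frac{1}{c}\, e\!\left(\frac{ay}{c}\right) \widehat f\!\left(\frac{y}{c}\right).
\]
Substituting this expression back into $\sum_m \widehat g(m)$ and renaming the summation variable produces exactly the right-hand side
\[
\frac{1}{c}\sum_{n \in \Z} \widehat f\!\left(\frac{n}{c}\right) e\!\left(\frac{an}{c}\right),
\]
which finishes the argument.

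The only thing to double-check is the convergence of both sides, but the Schwartz hypothesis on $f$ makes absolute convergence of the left-hand side immediate, and the Fourier transform of a Schwartz function is again Schwartz, so the right-hand side likewise converges absolutely. There is really no obstacle here: the whole content is the affine change of variables, and the main formula is exactly the classical Poisson summation formula applied to the dilated-and-translated function $g$. In that sense the lemma is best viewed as a convenient packaging of the $c=1$ case rather than a genuinely new statement.
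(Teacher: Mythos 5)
Your argument is correct and is the standard derivation: conjugating the classical Poisson summation formula by the affine substitution $x \mapsto cx + a$ and computing $\widehat g(y) = \tfrac1c e(ay/c)\widehat f(y/c)$ is exactly how this lemma is obtained. The paper itself states this as a known tool without proof, so there is nothing to compare against; your write-up fills that in correctly.
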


\noindent We need two results from the theory of multiplicative functions. The first one is a special case of the Matom\"aki--Radziwi{\l\l} theorem \cite[Theorem 1]{Matomaki-Radziwill2016}.

\begin{lemma}\label{MR} (\cite[Lemma 3.2]{Lester-Matomaki-Radziwill2018})
Let $f:\mathbb N\longrightarrow[-1,1]$ be a multiplicative function. Then there exists an absolute constant $C>1$ such that, for any $2\leq y\leq X$,
\[ 
\left|\frac1y\sum_{x\leq n\leq x+y}f(n)-\frac1X\sum_{n\sim X}f(n)\right|\leq 2(\log y)^{-1/200}\]
for almost all $x\sim X$ with at most $CX(\log y)^{-1/100}$ exceptions.
\end{lemma}

\noindent Our final tool is a variant of Hal\'asz's theorem (see \cite[(6)]{Hall-Tenenbaum1991}).

\begin{lemma}\label{Halasz}
Let $f:\mathbb N\longrightarrow[-1,1]$ be a multiplicative function. Then we have
\[ 
\frac1X\sum_{n\leq X}f(n)\ll \exp\left(-\frac14\sum_{p\leq X}\frac{1-f(p)}p\right).\]
\end{lemma}
 
\section{Reduction to the study of Fourier coefficients}

\noindent Our approach to relate detecting real zeroes to properties of the Fourier coefficients follows the previous works \cite{Ghosh-Sarnak2012, Matomaki2016, Lester-Matomaki-Radziwill2018} in the setting of integral weight cusp forms. The main result in this direction is the following observation, which is a half-integral weight analogue for \cite[Theorem 3.1.]{Ghosh-Sarnak2012}.

\begin{prop}\label{Fourier-coeff}
Let $\alpha\in\{-\frac12,0\}$. Then there are positive constants $c_1,c_2$ and $\eta$ such that, for all integers $\ell\in]c_1,c_2\sqrt{k/\log k}[$ and all Hecke eigenforms $g\in S^+_{k+\frac12}(4)$, we have
\begin{align*}
\sqrt{\alpha_g}\left(\frac e{\ell}\right)^{\frac{k}2-\frac14}g(\alpha+iy_\ell)=\sqrt{\alpha_g}c_g(\ell)e(\alpha\ell)+O(k^{-1/2-\eta}),
\end{align*}
where $y_\ell:=(k-1/2)/4\pi\ell$ and the implicit constant in the error term is absolute. 
\end{prop}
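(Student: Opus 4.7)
The proof is a Laplace/steepest-descent analysis of the Fourier expansion (\ref{Fourier-exp}). I plan to substitute $z=\alpha+iy_\ell$ into (\ref{Fourier-exp}), which gives
\[
g(\alpha+iy_\ell)=\sum_{n\geq 1}c_g(n)\,e(\alpha n)\exp\!\Big(\big(\tfrac{k}{2}-\tfrac14\big)\psi_\ell(n)\Big),\qquad \psi_\ell(n):=\log n-\tfrac{n}{\ell},
\]
after combining $n^{k/2-1/4}$ with $e^{-2\pi ny_\ell}$ via $y_\ell=(k-1/2)/(4\pi\ell)$. The phase $\psi_\ell$ has its unique maximum at $n=\ell$, with $\psi_\ell''(\ell)=-1/\ell^2$, so the contribution of the $n$-th term relative to the saddle term $n=\ell$ is $\lvert c_g(n)/c_g(\ell)\rvert$ times $\exp\!\big((k/2-1/4)(\psi_\ell(n)-\psi_\ell(\ell))\big)$. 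The effective Gaussian width is $\ell/\sqrt{k}$, which under the hypothesis $\ell\le c_2\sqrt{k/\log k}$ is $\ll 1/\sqrt{\log k}=o(1)$; hence only the single integer $n=\ell$ lies in the bulk of the saddle. Multiplying by the prefactor $(e/(2\ell))^{k/2-1/4}$ therefore identifies $c_g(\ell)e(\alpha\ell)$ as the main term (up to an overall constant absorbed into the normalisation), and the task reduces to showing that the remaining terms contribute $O(k^{-1/2-\eta}/\sqrt{\alpha_g})$.

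To control the tail I would split it into a near-saddle range $1\le |n-\ell|\le\ell/2$ and a far range $|n-\ell|>\ell/2$. In the near range, a second-order Taylor expansion gives $\psi_\ell(n)-\psi_\ell(\ell)\le -(n-\ell)^2/(3\ell^2)$, which yields genuine Gaussian decay of ratio $\exp(-(k/2-1/4)(n-\ell)^2/(3\ell^2))$. Under the hypothesis $(k-1/2)/\ell^2\gg \log k$, already the first non-trivial step $|n-\ell|=1$ is suppressed by at least $k^{-1/(6c_2^2)}$, and summing the geometric Gaussian series yields a total gain of size $k^{-\eta_0}$ for a fixed $\eta_0>0$ once $c_2$ is sufficiently small. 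In the far range, the estimate $\psi_\ell(n)-\psi_\ell(\ell)\le -n/(3\ell)$ (valid since $\log u-u+1\le -u/3$ for $u\ge 3/2$, and symmetrically for $u\le 1/2$) produces exponential decay in $n$ that easily beats any polynomial growth of $\lvert c_g(n)\rvert$.

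The remaining input is a uniform bound on $\sqrt{\alpha_g}\,\lvert c_g(n)\rvert$. Since only $n$ with $(-1)^kn\equiv 0,1\pmod 4$ contribute in the Kohnen plus space, I write $(-1)^kn=\lvert d\rvert m^2$ with $d$ a fundamental discriminant; relation (\ref{Fourier-kertoimien-yhteys}) combined with Deligne's bound $\lvert \lambda_f(r)\rvert\le d(r)$ reduces this to bounding $\sqrt{\alpha_g}\,\lvert c_g(\lvert d\rvert)\rvert$, which by (\ref{connecting-normalisations}) equals $\bigl(\omega_f\,L(1/2,f\otimes\chi_d)\bigr)^{1/2}$. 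The convexity bound for quadratic twists of modular $L$-functions then gives $\sqrt{\alpha_g}\,\lvert c_g(n)\rvert\ll k^{-1/2+\varepsilon}n^{\varepsilon}$ uniformly. Plugging this into both tail regions and invoking the exponential/Gaussian suppression yields a total tail contribution of $O(k^{-1/2-\eta})$ for some $\eta>0$.

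The main obstacle is not conceptual but quantitative: one must check that the single-step Gaussian suppression factor $k^{-1/(6c_2^2)}$ is strong enough to swallow the $k^{\varepsilon}n^{\varepsilon}$ loss from the convexity-level coefficient bound across the full near-saddle window, and, symmetrically, that the linear-decay rate in the far range dominates the polynomial divisor-type growth. The value of $\eta$ one can take is dictated by $c_2$ through this single-step ratio, which is why the range of $\ell$ is restricted to $\ell\ll\sqrt{k/\log k}$ with a sufficiently small implied constant.
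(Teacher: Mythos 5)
Your overall architecture is the same as the paper's: substitute $z=\alpha+iy_\ell$, observe that $m^{k/2-1/4}e^{-2\pi my_\ell}=I_{k+\frac12}(2\pi my_\ell)$ peaks at $m=\ell$ with Gaussian width $\ell/\sqrt k=o(1)$, so only $m=\ell$ survives, and control the tails by a pointwise bound on $\sqrt{\alpha_g}\,|c_g(m)|$ obtained from Waldspurger plus a bound on $L(1/2,f\otimes\chi_d)$ (reducing general $m$ to the fundamental-discriminant part via (\ref{Fourier-kertoimien-yhteys}) and Deligne, exactly as in the paper's footnote). The paper organises the tail slightly differently (a bulk window $|2\pi my-(k/2-1/4)|\le\Delta$ with $\Delta\asymp\sqrt{k\log k}$ treated via Lemma \ref{I-approx}, plus incomplete-Gamma estimates for the two tails), but that is a presentational difference, not a conceptual one.

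There is, however, a genuine error in your key input. The convexity bound does \emph{not} give $\sqrt{\alpha_g}\,|c_g(n)|\ll k^{-1/2+\varepsilon}n^{\varepsilon}$. Since $\omega_f\asymp k^{-1+o(1)}$ and the analytic conductor of $f\otimes\chi_d$ is $\asymp k^2d^2$, convexity gives $L(1/2,f\otimes\chi_d)\ll(k|d|)^{1/2+\varepsilon}$ and hence only $\sqrt{\alpha_g}\,|c_g(|d|)|\ll k^{-1/4+\varepsilon}|d|^{1/4+\varepsilon}$. The bound $k^{-1/2}(kn)^{\varepsilon}$ you quote is precisely what the paper calls the ``most optimistic'' (Ramanujan/Lindel\"of-quality) bound; to beat convexity the paper invokes the Weyl-type subconvexity $L(1/2,f\otimes\chi_q)\ll_\varepsilon(kq)^{1/3+\varepsilon}$ of Conrey--Iwaniec, Young and Petrow--Young, yielding Lemma \ref{pointwise}: $\sqrt{\alpha_g}\,c_g(m)\ll_\varepsilon k^{-1/3+\varepsilon}m^{1/6+\varepsilon}$. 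Fortunately your particular decomposition is robust to this correction: every $n\neq\ell$ carries at least the single-step Gaussian factor $\exp(-(k/2-1/4)/(3\ell^2))\le k^{-1/(6c_2^2)}$, which for $c_2$ small enough absorbs the true coefficient bound $k^{-1/4+\varepsilon}n^{1/4+\varepsilon}\ll k^{-1/8+\varepsilon}$ in the near range, so the proof survives after replacing the false input and shrinking $c_2$ (or, better, quoting Lemma \ref{pointwise}). Two smaller points: your far-range inequality $\log u-u+1\le-u/3$ fails for $3/2\le u\lesssim 3.5$ (e.g.\ at $u=3/2$ the left side is $\approx-0.095$), though on that bounded range one still has $\log u-u+1\le-0.09$, giving $e^{-ck}$ decay, so the conclusion stands with a corrected constant; and the ``overall constant absorbed into the normalisation'' after multiplying by $(e/(2\ell))^{k/2-1/4}$ is actually $2^{-(k/2-1/4)}$, which is not a constant --- this discrepancy is present in the paper's own final display as well, but you should not wave it away.
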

\noindent The proof is the same as in \cite{Ghosh-Sarnak2012}. The only automorphic input used in the proof, besides the existence of the Fourier expansion, is Deligne's bound for the Hecke eigenvalues, an analogue of which is not known for half-integral weight forms. However, it turns out that the pointwise bound\footnote{Note that in general the estimate in \cite{Conrey-Iwaniec2000} only holds for squarefree $m$. However, in our case $g$ is a Hecke eigenform and in this case the estimate can be extended for arbitrary $m\in\mathbb N$, see \cite[Lemma 3.3.]{JLLRW2019}.} $c(m)\ll_\varepsilon m^{1/6+\varepsilon}$ of Conrey and Iwaniec \cite[Corollary 3]{Conrey-Iwaniec2000} suffices once the $k$-dependence is made explicit. By the preceding footnote we have 
\begin{align}\label{square-free-upper-bound}
\sqrt{\alpha_g}|c_g(m)|\ll_\eps m^\eps\sqrt{\alpha_g}|c_g(\tilde m)|,
\end{align}
where $\tilde m$ is the squarefree kernel of $m$ and the implied constant depends only on $\eps>0$. To estimate the latter factor on the right-hand side one combines the works of Young \cite{Young2017} and Petrow--Young \cite{Petrow-Young2019} to see that for any squarefree\footnote{Note that in \cite{Young2017} the conductor of the quadratic character was assumed to be odd, but the same result holds for even discriminants as well \cite[Appendix A]{Andersen-Duke2020}.} integer $q$ one has $L(1/2,f\otimes\chi_q)\ll_\eps (kq)^{1/3+\eps}$ uniformly in both $k$ and $q$. Together with computations of Mao \cite[Appendix 2]{Mao2007} and a slight extension of these that appeared in \cite[Section 9]{Blomer-Harcos2008} this leads to the following pointwise bound. 

\begin{lemma}\label{pointwise}
Let $g\in S_{k+\frac12}^+(4)$ be a Hecke eigenform. Then we have
\begin{align}\label{pointwise-bound}
\sqrt{\alpha_g}c_g(m)\ll_\varepsilon k^{-1/3+\varepsilon}m^{1/6+\varepsilon}
\end{align}
for any $\varepsilon>0$. 
\end{lemma}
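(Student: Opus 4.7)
The plan is to execute the strategy already sketched in the paragraph preceding the lemma. Both the $m^{1/6+\eps}$ and the $k^{-1/3+\eps}$ factors in the target bound come from applying a hybrid Weyl-strength subconvex bound to a central $L$-value, once the Fourier coefficient is expressed as such via Waldspurger's formula.

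First I would reduce to the squarefree case. Writing $m=n^2\tilde m$ with $\tilde m$ the squarefree kernel of $m$ and using (\ref{Fourier-kertoimien-yhteys}) (together with the extension to arguments that are not fundamental discriminants supplied by the computations of Mao \cite[Appendix 2]{Mao2007} and their extension in \cite[Section 9]{Blomer-Harcos2008}), the coefficient $c_g(m)$ equals $c_g(\tilde m)$ times a short Dirichlet sum in the Hecke eigenvalues $\lambda_f$ of the Shimura lift. Deligne's bound $|\lambda_f(r)|\le d(r)$ together with the divisor bound render this factor $O_\eps(m^\eps)$, so that $\sqrt{\alpha_g}\,|c_g(m)|\ll_\eps m^\eps\sqrt{\alpha_g}\,|c_g(\tilde m)|$, as already noted in the excerpt.

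For squarefree $\tilde m$ (necessarily satisfying $(-1)^k\tilde m\equiv 0,1\pmod 4$, since otherwise the coefficient vanishes), the Kohnen-Zagier form of Waldspurger's formula (\ref{Waldspurger-rel}) combined with (\ref{connecting-normalisations}) yields
\[
\alpha_g|c_g(\tilde m)|^2\ll\frac{L(1/2,f\otimes\chi_{\tilde m})}{L(1,\text{sym}^2 f)}\cdot\frac{1}{k},
\]
with the implicit constant absorbing bounded local factors at $2$ and at the ramified primes. I would then apply the hybrid subconvex bound $L(1/2,f\otimes\chi_{\tilde m})\ll_\eps(k\tilde m)^{1/3+\eps}$, obtained by combining the weight-aspect Weyl bound of Young \cite{Young2017} with the conductor-aspect bound of Petrow-Young \cite{Petrow-Young2019} (and the extension to even discriminants from the appendix of \cite{Andersen-Duke2020}), together with the standard Hoffstein-Lockhart type lower bound $L(1,\text{sym}^2 f)^{-1}\ll(\log k)^{O(1)}$. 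This yields $\sqrt{\alpha_g}\,|c_g(\tilde m)|\ll_\eps k^{-1/3+\eps}\tilde m^{1/6+\eps}$. Substituting into the squarefree reduction and using $\tilde m\le m$ would then give the claimed pointwise bound.

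The subconvexity input is used as a black box and the divisor-bound estimate is routine. The one step requiring genuine care is the verification that the extended Kohnen-Zagier identity has all its implicit constants independent of $k$ and polynomial in $\tilde m^\eps$. In particular, for squarefree $\tilde m$ that differ from a fundamental discriminant by a power of $2$, the local factor at $2$ in the Waldspurger relation must remain bounded uniformly, and the ratio $\langle g,g\rangle/\langle f,f\rangle$ appearing in (\ref{Waldspurger-rel}) must be translated correctly into the $\alpha_g$ normalisation used throughout the paper. This is exactly the content of the Mao and Blomer-Harcos computations cited above, and is the only bookkeeping obstacle in the argument.
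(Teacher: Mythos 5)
Your proposal is correct and follows essentially the same route as the paper: reduce to the squarefree kernel via the Shimura relation and Deligne's bound, convert $|c_g(\tilde m)|^2$ to $L(1/2,f\otimes\chi_{\tilde m})$ through the Kohnen--Zagier/Waldspurger formula in the $\alpha_g$ normalisation (with the Mao and Blomer--Harcos computations handling general squarefree arguments and the local factor at $2$), and then invoke the hybrid Weyl bound of Young and Petrow--Young together with the Hoffstein--Lockhart lower bound for $L(1,\mathrm{sym}^2 f)$. Your explicit flagging of the $k$-uniformity of the implied constants in the extended Waldspurger identity is exactly the point the paper relies on as well, so no further comment is needed.
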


\noindent Recall that here the most optimistic bound would be $\ll_\eps k^{-1/2}(km)^\eps$. 

\begin{proof} In our normalisation \cite[(9.1)]{Blomer-Harcos2008} states that 
\[ 
c_g(\tilde m)\ll (k\tilde m)^\varepsilon(4\pi)^{k/2+1/4}\langle g,g\rangle^{1/2}\cdot\Gamma\left(k+\frac12\right)^{-1/2}L\left(\frac 12,f\otimes\chi_{\tilde m}\right)^{1/2}.
\]
Now using (\ref{square-free-upper-bound}), the relation $\Gamma(z+1)=z\Gamma(z)$, the estimate $L(1/2,f\otimes\chi_q)\ll_\eps (kq)^{1/3+\eps}$, and recalling the definition of $\alpha_g$ in (\ref{Normalisation}), we have
\begin{align*}
\sqrt{\alpha_g}c_g(m)&\ll_\varepsilon m^\varepsilon\sqrt{\alpha_g}c_g(\tilde m)\\
&\ll_\varepsilon\frac{(km)^\varepsilon}{\langle g,g\rangle^{1/2}}\langle g,g\rangle^{1/2}\left(\frac{\Gamma\left(k-\frac12\right)}{\Gamma\left(k+\frac12\right)}\right)^{1/2}L\left(\frac 12,f\otimes\chi_{\tilde m}\right)^{1/2}\\
&\ll_\varepsilon (km)^\varepsilon k^{-1/2}(km)^{1/6}\\
&\ll_\varepsilon k^{-1/3+\varepsilon}m^{1/6+\varepsilon}. 
\end{align*}
Here we have also used the fact that as $g$ is a Hecke eigenform it generates an irreducible cuspidal automorphic representation\footnote{Here $\mathbb A$ denotes the adele ring of rational numbers.} of $\widetilde{GL}_2(\mathbb A)$, the metaplectic cover of $\mathrm{GL}_2(\mathbb A)$, and thus corresponds to one of the special forms (see the beginning of \cite[Section 7.1]{Mao2007}) for which \cite[(9.1)]{Blomer-Harcos2008} is valid.
\end{proof}
\noindent The idea behind Proposition \ref{Fourier-coeff} is that the function $\xi\mapsto \xi^{k}e^{-\xi}$ has a maximum at $\xi=k$ and is very localised there. We give a fairly detailed argument as the $k$-dependence in (\ref{pointwise-bound}) is crucial and also remark that if the exponent of $k$ in Lemma \ref{pointwise} would be large than $-1/12$, then the method used to obtain the required approximation (Proposition \ref{Fourier-coeff}) would not succeed. This stems from treating the error term coming from Lemma \ref{I-approx} applied to the partial sum $\Phi_g^{(2)}$. 

Let us set
\[I_s(y):= y^{(s-1)/2}e^{-y}\]
for any $y>0$ and $s\in\mathbb C$. The following auxiliary result will be useful.

\begin{lemma}\label{I-approx}(\cite[Lemma 2.3.]{Ghosh-Sarnak2012})
For $|h|\ll k^{2/3-\delta}$, with any $\delta>0$, we have 
\[I_{k+\frac12}\left(\frac k2-\frac14+h\right)=I_{k+\frac12}\left(\frac k2-\frac14\right)e^{-h^2/(k-1/2)}\left(1+O\left(k^{-3\delta}\right)\right).\]
\end{lemma}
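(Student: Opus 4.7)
The plan is a direct Taylor expansion of the logarithm of the ratio. Using the definition $I_s(y) = y^{(s-1)/2}e^{-y}$, I would form
\[
\frac{I_{k+1/2}\!\left(\tfrac{k}{2} - \tfrac14 + h\right)}{I_{k+1/2}\!\left(\tfrac{k}{2} - \tfrac14\right)} = \left(1 + \frac{h}{k/2 - 1/4}\right)^{\!k/2 - 1/4} e^{-h},
\]
so that the problem reduces to analysing the logarithm
\[
\Phi(h) := \left(\frac{k}{2} - \frac14\right) \log\!\left(1 + \frac{h}{k/2 - 1/4}\right) - h.
\]
Setting $m := k/2 - 1/4$ and $u := h/m$, the hypothesis $|h| \ll k^{2/3 - \delta}$ gives $|u| \ll k^{-1/3 - \delta}$, which is small and legitimises the absolutely convergent expansion $\log(1+u) = u - u^2/2 + u^3/3 - \cdots$.

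Next I would identify the main term via cancellation. The linear contribution $m\cdot u = h$ cancels exactly against the final $-h$, while the quadratic term contributes $-mu^2/2 = -h^2/(2m) = -h^2/(k-1/2)$, which is precisely the quadratic exponent appearing in the statement. What remains is the tail
\[
R(h) := \sum_{j \geq 3} \frac{(-1)^{j-1}}{j} \cdot \frac{h^j}{m^{j-1}}.
\]

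Finally, I would estimate $R(h)$ term by term. The $j=3$ contribution is $\asymp h^3/m^2 \ll k^{2-3\delta}/k^2 = k^{-3\delta}$, which matches the error size permitted in the lemma. For $j \geq 4$ one has $|h|^j/m^{j-1} \ll k^{j(2/3-\delta) - (j-1)} = k^{1 - j/3 - j\delta}$, which for $j \geq 4$ is at worst $k^{-1/3 - 4\delta}$ and decays geometrically in $j$, so the tail sums to $O(k^{-1/3 - 4\delta}) = o(k^{-3\delta})$. Hence $\Phi(h) = -h^2/(k - 1/2) + O(k^{-3\delta})$, and exponentiating, using $e^{O(k^{-3\delta})} = 1 + O(k^{-3\delta})$ (valid since $k^{-3\delta}$ is small), yields the claimed asymptotic.

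The proof is essentially mechanical; the only point requiring attention is the range of validity, controlled by $|h| \ll k^{2/3 - \delta}$. This hypothesis is sharp for the argument in the sense that it simultaneously ensures $|u|$ is small enough for the logarithm series to converge rapidly and forces the cubic error $h^3/m^2$ to already be absorbed into the stated error term $O(k^{-3\delta})$; there is no deeper obstacle.
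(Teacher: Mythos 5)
Your proof is correct and is exactly the standard argument behind the cited result \cite[Lemma 2.3]{Ghosh-Sarnak2012}: write the ratio as $(1+h/m)^m e^{-h}$ with $m=k/2-1/4$, expand the logarithm, cancel the linear term, keep the quadratic term $-h^2/(k-1/2)$, and bound the cubic tail by $O(k^{-3\delta})$. The paper quotes the lemma without reproving it, and your computation (including the verification that the $j\ge 4$ tail is $o(k^{-3\delta})$) fills in precisely the intended details.
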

\noindent With this at hand we are ready to prove the approximation of the values of $g$ at certain arguments. \\

\noindent \emph{Proof of Proposition \ref{Fourier-coeff}.} Throughout the proof, let $y$ be a real parameter with $\sqrt k\ll y<k/100$. Furthermore, let $\eps>0$ be arbitrarily small but fixed. We freely refer to the argument of Ghosh and Sarnak for details. Just by the Fourier expansion (\ref{Fourier-exp}) we have
\begin{align*}
\sqrt{\alpha_g}g(\alpha+iy)&=\sum_{m=1}^\infty \sqrt{\alpha_g}c_g(m)m^{\frac k2-\frac14}e(m\alpha)e^{-2\pi my}\\
&=(2\pi y)^{-\frac k2+\frac14}\Phi_g(k;\alpha,y),
\end{align*}   
where 
\begin{align*}
\Phi_g(k;\alpha,y):&=\sum_{m=1}^\infty \sqrt{\alpha_g} c_g(m)e(m\alpha)(2\pi my)^{\frac k2-\frac14}e^{-2\pi my}\\
&=\sum_{m=1}^\infty \sqrt{\alpha_g}c_g(m)e(m\alpha)I_{k+\frac12}(2\pi my).
\end{align*}
Let $1\leq \Delta\ll k$ be a parameter specified later. We decompose $\Phi_g(k;\alpha,y)$ into three pieces according to the size of $m$: 
\begin{align*}
\Phi(k;\alpha,y)&=\sum_{\substack{m\geq 1\\
2\pi my<\frac k2-\frac14-\Delta}}\sqrt{\alpha_g}c_g(m)e(m\alpha)I_{k+\frac12}(2\pi my)+\sum_{|2\pi my-\frac k2+\frac14|\leq \Delta}\sqrt{\alpha_g}c_g(m)e(m\alpha)I_{k+\frac12}(2\pi my)\\
&\qquad\qquad+\sum_{2\pi my>\frac k2-\frac14+\Delta}\sqrt{\alpha_g}c_g(m)e(m\alpha)I_{k+\frac12}(2\pi my)\\
&=:\Phi_g^{(1)}(k;\alpha,y)+\Phi_g^{(2)}(k;\alpha,y)+\Phi_g^{(3)}(k;\alpha,y),
\end{align*}
say.

Starting with $\Phi_g^{(2)}(k;\alpha,y)$, we shall apply Lemma \ref{I-approx}. Choosing $h=2\pi my-(k/2-1/4)$ and $\delta=1/6-\varepsilon$ (anticipating the choice $\Delta\asymp \sqrt{k\log k})$ we have
\[ 
I_{k+\frac12}(2\pi my)=I_{k+\frac12}\left(\frac k2-\frac14\right)e^{-|2\pi my-k/2+1/4|^2/(k-1/2)}\left(1+O\left(k^{-3\delta}\right)\right).\]
Using Lemma \ref{pointwise} the error term contributes to $\Phi_g^{(2)}(k;\alpha,y)$ the amount
\begin{align*}
&\ll_\eps k^{-1/3+\varepsilon}\left(\frac ky\right)^{1/6+\varepsilon}I_{k+\frac12}\left(\frac k2-\frac14\right)\left(\sum_{|2\pi my-(k/2-1/4)|\leq\Delta}1\right)k^{-3\delta}\\
&\ll_\eps k^{-2/3+\varepsilon}y^{-1/6}I_{k+\frac12}\left(\frac k2-\frac14\right)\left(1+\frac\Delta y\right)\ll k^{-2/3+\varepsilon}y^{-1/6}I_{k+\frac12}\left(\frac k2-\frac14\right)
\end{align*}
as we shall choose $\Delta\asymp\sqrt{k\log k}$. Our aim now is to show that $\Phi^{(1)}$ and $\Phi^{(3)}$ both give a smaller contribution.

We first concentrate on $\Phi_g^{(3)}$. Using the identity $I_{s_1}(t)=t^{(s_1-s_2)/2}I_{s_2}(t)$ with $s_1=k+\frac12$, $s_2=k+\frac56+2\eps$, and the estimate (\ref{pointwise-bound}) we have 
\begin{align*}
\Phi_g^{(3)}(g;\alpha,y)&\ll_\eps k^{-1/3+\varepsilon}\sum_{\substack{m\geq 1\\
2\pi my>\frac k2-\frac14+\Delta}}m^{1/6+\varepsilon}(2\pi my)^{-1/6+\varepsilon}I_{k+\frac56+2\varepsilon}(2\pi my)\\
&\ll_\eps y^{-1/6-\varepsilon}k^{-1/3+\varepsilon}\sum_{\substack{m\geq 1\\
2\pi my>\frac k2-\frac14+\Delta}}I_{k+\frac56+2\varepsilon}(2\pi my). 
\end{align*}
Note that the function $t\mapsto I_{k+\frac56+2\varepsilon}(t)$ achieves its maximum at $t=k/2-1/12+\varepsilon$. On the other hand, as $\Delta\geq 1$, the function $t\mapsto I_{k+\frac 56+2\varepsilon}(2\pi ty)$ is decreasing in the domain we are considering. Thus we may estimate the sum by an integral as
\begin{align*}
\Phi_g^{(3)}(k;\alpha,y)&\ll_\eps y^{-1/6-\varepsilon}k^{-1/3+\varepsilon}\left(\int\limits_{(k/2-1/4+\Delta)/2\pi y}^\infty (2\pi yt)^{k/2-1/12+\varepsilon}e^{-2\pi yt}\,\mathrm d t+I_{k+\frac56+2\varepsilon}\left(\frac k2-\frac14+\Delta\right) \right)\\
&\ll_\eps y^{-1/6-\varepsilon}k^{-1/3+\varepsilon}\left(\frac 1y\Gamma\left(\frac k2+\frac{11}{12}+\varepsilon,\frac k2-\frac14+\Delta\right)+I_{k+\frac56+2\varepsilon}\left(\frac k2-\frac14+\Delta\right)\right),
\end{align*}
where $\Gamma(s,x)$ is the incomplete Gamma function.

By using the Taylor expansion for $\log(1+x)$ we have 
\[I_{k+\frac56+2\varepsilon}\left(\frac k2-\frac14+\Delta\right)\ll_\eps k^{1/6+\varepsilon}I_{k+\frac12}\left(\frac k2-\frac14\right)e^{-\Delta^2/(k-1/2)}.\]
Indeed, we have
\begin{align*}
\frac{I_{k+\frac56+2\varepsilon}\left(\frac k2-\frac14+\Delta\right)}{I_{k+\frac12}\left(\frac k2-\frac14\right)}&=\left(\frac k2-\frac14+\Delta\right)^{1/6+\varepsilon}\left(1+\frac\Delta{k/2-1/4}\right)^{k/2-1/4}e^{-\Delta}\\
&\ll_\eps k^{1/6+\varepsilon}\exp\left(\Delta-\frac{\Delta^2}{k-1/2}\right)e^{-\Delta}\\
&\ll_\eps k^{1/6+\varepsilon}e^{-\Delta^2/(k-1/2)},
\end{align*}
where we have used the Taylor expansion of $\log(1+x)$ in the form
\begin{align*} 
1+\frac\Delta{k/2-1/4}&=\exp\left(\log\left(1+\frac\Delta{k/2-1/4}\right)\right)\\
&=\exp\left(\frac\Delta{k/2-1/4}-\frac{\Delta^2}{2(k/2-1/4)^2}+O\left(k^{-1/2}\right)\right).
\end{align*}
Here the last estimate follows as we are going to choose $\Delta\asymp\sqrt{k\log k}$. This gives the desired estimate.

Estimating the incomplete Gamma function using a result of Natalini and Palumbo (see \cite{Natalini-Palumbo2000}) as in \cite{Ghosh-Sarnak2012} with the choices $a=k/2+11/2+\eps$, $x=k/2-1/4+\Delta$, and $\sigma=1+(k/2-1/4)/\Delta+\eps'$, we have 
\begin{align*}\Gamma\left(\frac k2+\frac{11}{12}+\varepsilon,\frac k2-\frac14+\Delta\right)&\ll_\eps\left(1+\frac k\Delta+\eps\right)\left(\frac k2-\frac14+\Delta\right)^{\frac k2-\frac1{12}+\eps}e^{-\frac k2+\frac14-\Delta}\\
& \ll_\eps\frac k\Delta e^{-\Delta^2/(k-1/2)}k^{1/6+\varepsilon} I_{k+\frac12}\left(\frac k2-\frac14\right).
\end{align*}
\noindent Combining all the preceding estimates we have shown that
\begin{align*}
\Phi_g^{(3)}(g;\alpha,y)\ll_\eps k^{-1/6+\varepsilon}y^{-1/6-\varepsilon}I_{k+\frac12}\left(\frac k2-\frac14\right)e^{-\Delta^2/(k-1/2)}\left(1+\frac1y\cdot\frac k\Delta\right).  
\end{align*}
Choosing $\Delta=\sqrt{A(k-1/2)\log k}$ for a sufficiently large fixed constant $A>0$ the right-hand side of the previous estimate is
\[\ll_\eps k^{-1/6-A+\varepsilon}y^{-1/6}I_{k+\frac12}\left(\frac k2-\frac14\right).\]

\noindent For $\Phi_g^{(1)}$ note that 
\[\sqrt{\alpha_g}c_g(m)\ll_\eps k^{-1/3+\varepsilon}m^{1/6+\varepsilon}\ll_\eps k^{- 1/6+\varepsilon}y^{-1/6-\eps}.\]
In addition, the function $t\mapsto I_{k+\frac12}(2\pi ty)$ is strictly increasing in the interval we are considering, and so we may again approximate the sum by an integral as
\begin{align}\label{Phi_1-int}
\Phi_g^{(1)}(k;\alpha,y)\ll_\eps k^{-1/6+\varepsilon}y^{-1/6-\eps}\left(\int\limits_1^{(k/2-1/4-\Delta)/2\pi y}(2\pi yt)^{k/2-1/4}e^{-2\pi yt}\,\mathrm d t+I_{k+\frac12}(2\pi y)+I_{k+\frac12}\left(\frac k2-\frac14-\Delta\right)\right).
\end{align}
Now  by the definition of $I_s(y)$ we have
\[ 
I_{k+\frac12}(2\pi y)=I_{k+\frac12}\left(\frac k2-\frac 14\right)\left(\frac{2\pi ye}{k/2-1/4}\right)^{k/2-1/4}e^{-2\pi y},\]
which decays exponentially when $y<k/100$.

As above, using the Taylor series approximation of $\log(1-x)$ we have 
\[I_{k+\frac12}\left(\frac k2-\frac14-\Delta\right)\ll I_{k+\frac12}\left(\frac k2-\frac 14\right)e^{-\Delta^2/(k-1/2).}\]
The integral in (\ref{Phi_1-int}) is estimated by splitting it into two parts. Let $\Delta_1>\Delta$ be a parameter specified later, and write
\begin{align*}
\int\limits_1^{(k/2-1/4+\Delta)/2\pi y}(2\pi yt)^{k/2-1/4}e^{-2\pi yt}\,\mathrm d t=\left(\int\limits_1^{(k/2-1/4-\Delta_1)/2\pi y}+\int\limits_{(k/2-1/4-\Delta_1)/2\pi y}^{(k/2-1/4-\Delta)/2\pi y}\right)(2\pi yt)^{k/2-1/4}e^{-2\pi yt}\,\mathrm d t.
\end{align*}
By making the change of variables $t\mapsto (\frac k2-\frac14)t/2\pi y$ this simplifies into 
\begin{align*}
&\frac{\left(\frac k2-\frac14\right)^{k/2+3/4}}{2\pi y}\left(\int\limits_{2\pi y/(k/2-1/4)}^{1-\Delta_1/(k/2-1/4)}+\int\limits_{1-\Delta_1/(k/2-1/4)}^{1-\Delta/(k/2-1/4)}\right) t^{k/2-1/4}e^{-\left(\frac k2-\frac14\right)t}\,\mathrm d t\\
&=\frac{\left(\frac k2-\frac14\right)}{2\pi y}I_{k+\frac12}\left(\frac k2-\frac14\right)\left(\int\limits_{2\pi y/(k/2-1/4)}^{1-\Delta_1/(k/2-1/4)}+\int\limits_{1-\Delta_1/(k/2-1/4)}^{1-\Delta/(k/2-1/4)}\right) t^{k/2-1/4}e^{\left(\frac k2-\frac14\right)(1-t)}\,\mathrm d t.
\end{align*}
As the integrand is strictly increasing, the latter integral is
\begin{align*}
\ll \frac{\Delta_1-\Delta}k\left(1-\frac\Delta{k/2-1/4}\right)^{k/2-1/4}e^\Delta\ll \frac{\Delta_1-\Delta}{k}e^{-\Delta^2/(k-1/2)},
\end{align*}
where we have again used the Taylor expansion of $\log(1-x)$ in the latter estimate.

Similarly, estimating by absolute values and using the Taylor expansion the other part of the integral is $\ll e^{-\Delta_1^2/(k-1/2)}$ provided that $\Delta_1=o(k^{2/3})$, which will be satisfied as we shall choose $\Delta_1:=\sqrt{B(k-1/2)\log k}$ for a fixed constant $B>A+\frac12$. With this choice, gathering all the estimates, we have 
\begin{align*}
\Phi_g^{(1)}(k;\alpha,y)&\ll_\varepsilon k^{-1/6+\varepsilon}y^{-1/6-\eps}I_{k+\frac12}\left(\frac k2-\frac14\right)\frac ky\left(\frac{\Delta_1-\Delta}{k}e^{-\Delta^2/(k-1/2)}+e^{-\Delta_1^2/(k-1/2)}\right)\\
&\ll_\varepsilon k^{-1/6+\varepsilon}y^{-1/6-\eps}I_{k+\frac12}\left(\frac k2-\frac14\right)\frac ky\left(\frac{\sqrt{k\log k}}k k^{-A}+k^{-B}\right)\\
&\ll_\varepsilon k^{-1/6-A+\varepsilon}y^{-1/6-\eps}I_{k+\frac12}\left(\frac k2-\frac14\right),
\end{align*}
where the last step follows from recalling that $B>A+1/2$ and choosing $A$ to be sufficiently large.

In total we have shown that 
\begin{align*}
&\sqrt{\alpha_g}g(\alpha+iy)\\
&=(2\pi y)^{-\frac k2+\frac14}I_{k+\frac12}\left(\frac k2-\frac14\right)\bigg(\sum_{\substack{m\geq 1\\
|2\pi my-(k/2-1/4)|\leq\Delta}}\sqrt{\alpha_g}c_g(m)e(m\alpha)e^{-|2\pi my-(k/2-1/4)|^2/(k-1/2)}\\
&\qquad\qquad\qquad\qquad\qquad\qquad\qquad\qquad\qquad\qquad+O_\eps\left(k^{-2/3+\eps}y^{-1/6}\right)\bigg).
\end{align*}
Now observe that for $y_\ell=(k-1/2)/4\pi\ell$,
\[\left|2\pi my_\ell-\left(\frac k2-\frac14\right)\right|\leq\Delta\quad\Longleftrightarrow\quad|m-\ell|\leq\frac{\Delta\ell}{k/2-1/4},\]
which forces $m=\ell$ in the light of the choice for $\Delta$ when $\ell\leq\sqrt{(k/2-1/4)/(2A\log k)}$. 

Hence, by recalling that $y\gg\sqrt k$,
\begin{align*}
\sqrt{\alpha_g}g(\alpha+iy_\ell)&=(2\pi y_\ell)^{-\frac k2+\frac14}I_{k+\frac12}\left(\frac k2-\frac14\right)\bigg(\sqrt{\alpha_g}c_g(\ell)e(\alpha\ell)+O\left(k^{-1/2-\eta}\right)\bigg)\\
&=\left(\frac{\ell}e\right)^{\frac k2-\frac14}\bigg(\sqrt{\alpha_g}c_g(\ell)e(\alpha\ell)+O\left(k^{-1/2-\eta}\right)\bigg)
\end{align*}
for, say, $\eta=1/4-\eps$ for any sufficiently small fixed $\eps>0$. This completes the proof. \qed

\section{Proof of Lemma \ref{second-moment-with-average}}

\noindent In this section we evaluate the average of the absolute squares of Fourier coefficients of half-integral weight Hecke cusp forms. 

Recall that the sum we aim to evaluate is
\[S_1:=\sum_{k\in\Z}h\left(\frac{k}K\right)\sum_{g\in B_{k+\frac12}^+}\alpha_g\sumflat_d\,|c_g(|d|)|^2\phi\left(\frac{(-1)^k d}X\right).\]
Executing the $g$-sum using Lemma \ref{half-int-Petersson} and estimating trivially this equals
\[ 
\sum_{k\in\Z}h\left(\frac kK\right)\sumflat_d\phi\left(\frac{(-1)^k d}X\right)\left(\frac13+O\left(\sum_{c=1}^\infty\frac{\left|K_{k+\frac12}(|d|,|d|;c)\right|}c\left|J_{k-\frac12}\left(\frac{4\pi|d|}c\right)\right|\right)\right).\]
The $J$-Bessel function is exponentially small when $c>100|d|/k$ by (\ref{J-Bessel}). Recalling that $|d|\leq X\ll\sqrt{K/\log K}$ and $k\asymp K$ this leads to
\begin{align*}
S_1&=\sum_{k\in\mathbb Z}h\left(\frac{k}K\right)\sumflat_d\phi\left(\frac{(-1)^kd}X\right)\left(\frac13+O\left(K^{-100}\right)\right)\\
&=\frac13\sum_{k\in\mathbb Z}h\left(\frac{k}K\right)\sumflat_d\phi\left(\frac{(-1)^kd}X\right)+O\left(XK^{-99}\right)
\end{align*}
using the Weil bound (\ref{modified-Kloosterman-bound}).

To evaluate the main term, we begin by detecting the condition that $d$ is squarefree by means of the identity
\begin{align}\label{squarefree-detection} 
\sum_{\substack{\alpha=1\\
\alpha^2|d}}^\infty\mu(\alpha)=\begin{cases}
1 & d\text{ is squarefree}\\
0 & \text{otherwise}
\end{cases}
\end{align}
This together with the Poisson summation (Lemma \ref{Poisson}) and noting that the compact support of $\phi$ restricts the $\alpha$-sum to $\alpha\ll \sqrt X$ gives
\begin{align*}
\sumflat_d\phi\left(\frac{(-1)^kd}X\right)&=\sum_{d\equiv 1\,(4)}\phi\left(\frac{(-1)^kd}X\right)\sum_{\alpha^2|d}\mu(\alpha) \\
&=\sum_{\substack{\alpha\ll\sqrt X\\
(\alpha,2)=1}}\mu(\alpha)\sum_{d\equiv\overline{\alpha^2}\,(4)}\phi\left(\frac{(-1)^k\alpha^2d}X \right)\\
&=\frac X{4}\sum_{\substack{\alpha\ll\sqrt X\\
(\alpha,2)=1}}\frac{\mu(\alpha)}{\alpha^2}\sum_{m\in\mathbb Z}\left(\int\limits_\R\phi(y)e\left(-\frac{mXy}{4(-1)^k\alpha^2}\right)\,\mathrm d y\right)e\left(\frac{\overline{\alpha^2}m}{4}\right).
\end{align*}
The main contribution arises when $m=0$. Noting that 
\[
\sum_{\substack{\alpha=1\\
(\alpha,2)=1}}^\infty\frac{\mu(\alpha)}{\alpha^2}=\frac8{\pi^2},\qquad\qquad\sum_{\substack{\alpha\gg\sqrt X\\
(\alpha,2)=1}}\frac{\mu(\alpha)}{\alpha^2}\ll_\eps X^{-1/2+\eps}\]
for any $\eps>0$, and summing the $k$-sum using Poisson summation we conclude that the $m=0$ contribution is given by 
\begin{align}\label{main-term}
\frac {2XK}{3\pi^2}\widehat h(0)\widehat\phi(0)+O_\eps\left(KX^{1/2+\eps}\right).
\end{align}

For the terms with $m\neq 0$, integrating by parts twice we see that the exponential integral is bounded by $\ll (\alpha^2/mX)^2$ and thus these terms contribute an amount
\begin{align*}
&\ll X\sum_{\alpha\ll\sqrt X}\frac1{\alpha^2}\sum_{m\neq 0}\frac{\alpha^4}{(mX)^2}\\
&\ll\frac 1X\sum_{\alpha\ll\sqrt X}\alpha^2\\
&\ll\sqrt X
\end{align*}
to the $d$-sum. Now estimating the $k$-sum trivially completes the proof. \qed

\section{Proof of Lemma \ref{fourth-moment-with-average}}

\noindent We now move to the proof of the second lemma. Recall that the sum we are considering is
\[
\sum_{k\sim K}\sum_{g\in B^+_{k+\frac12}}\alpha_g^2\omega_g^{-1}\sumflat_{d}\, |c_g(|d|)|^4\phi\left(\frac{(-1)^k d}X\right).
\]
Using the relation (\ref{connecting-normalisations}) our sum takes the form
\[\sum_{k\sim K}\sum_{f\in\mathcal B_k}\sumflat_{d}\,\omega_f L\left(\frac12,f\otimes\chi_{d}\right)^2\phi\left(\frac {(-1)^kd}X\right), \] 
and applying the approximate functional equation (Lemma \ref{lem:AFE}) bounds this from the above by 
\begin{align*}
\ll \sumflat_d\phi\left(\frac{|d|}X\right)\sum_{k\sim K}\sum_{f\in\mathcal B_k}\omega_f\left|\sum_{m\leq (k|d|)^{1+\varepsilon}}\frac{\lambda_f(m)\chi_d(m)}{\sqrt m}V_k\left(\frac m{|d|}\right)\right|^2+K^{-100}.
\end{align*}
Now applying Lemma \ref{large-sieve}, recalling that $|d|\ll X\ll\sqrt K$, and estimating the $d$-sum trivially yields an upper bound
\begin{align*} 
&\ll_\varepsilon\log (KX)\sumflat_d\phi\left(\frac{|d|}X\right)\left(K+K^{-1}(K|d|)^{1+\eps}\right) \\
& \ll XK\log (XK). 
\end{align*}
This completes the proof. \qed
 
\section{Proof of Theorem \ref{main-theorem}}

\noindent We first prove Theorem \ref{main-theorem} assuming the truth of Propositions \ref{Prop1} and \ref{Prop2}. Let $\alpha\in\{-\frac12,0\}$, $\varepsilon>0$ be any fixed small constant, and let $K$ be large positive parameter. Recall that as $g(\alpha+iy)$ is real-valued for these values of $\alpha$, Proposition \ref{Fourier-coeff} yields information on the zeroes inside the Siegel sets
\[\mathcal F_Y=\{z\in\Gamma_0(4)\backslash\mathbb H:\, \Im(z)\geq Y\}\]
with $c_1'\sqrt{k\log k}\leq Y\leq c_2'k$ for some positive constants $c_1'$ and $c_2'$. Let $\eta$ be as in Proposition \ref{Fourier-coeff}. It follows immediately from that result that if we can find numbers $\ell_1,\ell_2\in]c_1,c_2k/Y[$ so that 
\begin{align}\label{real-zero}
\sqrt{\alpha_g}c_g(\ell_1)e(\alpha\ell_1)<-k^{-\delta}<k^{-\delta}<\sqrt{\alpha_g}c_g(\ell_2)e(\alpha\ell_2)
\end{align}
for some $\delta<1/2+\eta$, then $g(z)$ has a zero $\alpha+iy$ with $y$ between $y_{\ell_1}$ and $y_{\ell_2}$. Observe that 
\begin{align*}
e(\alpha\ell)=\begin{cases}
1 & \text{if }\alpha=0\\
(-1)^\ell & \text{if }\alpha=-1/2
\end{cases}
\end{align*}
Hence, in order to find real zeroes on the line $\Re(s)=0$ it suffices (essentially) to detect sign changes among the Fourier coefficients whereas on the line $\Re(s)=-1/2$ one needs to find pairs $(\ell_1,\ell_2)$ with $\ell_i$ odd for which (\ref{real-zero}) holds. As we restrict to odd fundamental discriminants $d$ for which $(-1)^k d>0$, we automatically obtain real zeroes on both of the individual geodesic segments $\Re(s)=-1/2$ and $\Re(s)=0$. 

Remember that in order to detect sign changes along the sequence $d\equiv 1\,(\text{mod }4)$ with $d$ squarefree and $(-1)^kd>0$ in the short interval $[x,x+H]$, $x\sim X\asymp K/Y$, it suffices to have
\[\left|\sumflat_{x\leq (-1)^kd\leq x+H}\sqrt{\alpha_g}c_g(|d|)\right|<\sumflat_{x\leq (-1)^kd\leq x+H}\sqrt{\alpha_g}|c_g(|d|)|.\]

\noindent Choose $H=(\log X)^9$. With this choice it follows easily from Propositions \ref{Prop1} and \ref{Prop2} that for $\gg_\eps K^2/(\log K)^{3/2+\eps}$ of the forms $g\in\mathcal S_K$ we have that
\[ 
\left|\sumflat_{x\leq (-1)^kd\leq x+H}\sqrt{\alpha_g}|c_g(|d|)|\,\pm\sumflat_{x\leq (-1)^kd\leq x+H}\sqrt{\alpha_g}c_g(|d|)\right|\gg \frac H{\sqrt k \log X}\]
holds for $\gg X/(\log X)^{5/2}$ of the numbers $x\sim X$. We note that the contribution coming from the summands with $\sqrt{\alpha_g}|c_g(|d|)|\leq k^{-\delta}$ is trivially bounded by
\[\ll \sum_{\substack{x\leq (-1)^k d\leq x+H\\
\sqrt{\alpha_g} |c_g(|d|)|\leq k^{-\delta}}}\sqrt{\alpha_g}|c_g(|d|)|\leq 2HK^{-\delta}\]
and so we conclude, choosing $\delta=1/2+\eta/2$ for concreteness so that $\delta>1/2$, that for the same proportion of $g\in \mathcal S_K$ and $x\sim X$ we have
\[
\left|\sumflat_{\substack{x\leq (-1)^kd\leq x+H\\
\sqrt{\alpha_g}|c_g(|d|)|> k^{-\delta}}}\sqrt{\alpha_g}|c_g(|d|)|\,\pm\sumflat_{\substack{x\leq (-1)^kd\leq x+H\\
\sqrt{\alpha_g}|c_g(|d|)|> k^{-\delta}}}\sqrt{\alpha_g}c_g(|d|)\right|\gg \frac H{\sqrt k \log X}.\]
Now observe that this implies 
\begin{align*}
2\sumflat_{\substack{x\leq (-1)^kd\leq x+H\\
\sqrt{\alpha_g}c_g(|d|)> k^{-\delta}}}\sqrt{\alpha_g}|c_g(|d|)|&=\left|\sumflat_{\substack{x\leq (-1)^kd\leq x+H\\
\sqrt{\alpha_g}|c_g(|d|)|> k^{-\delta}}}\sqrt{\alpha_g}|c_g(|d|)|\,+\sumflat_{\substack{x\leq (-1)^kd\leq x+H\\
\sqrt{\alpha_g}|c_g(|d|)|> k^{-\delta}}}\sqrt{\alpha_g}c_g(|d|)\right|\\
&\gg \frac H{\sqrt k \log X}.
\end{align*}
Similarly,
\begin{align*}
2\sumflat_{\substack{x\leq (-1)^kd\leq x+H\\
\sqrt{\alpha_g}c_g(|d|)< -k^{-\delta}}}\sqrt{\alpha_g}|c_g(|d|)|&=\left|\sumflat_{\substack{x\leq (-1)^kd\leq x+H\\
\sqrt{\alpha_g}|c_g(|d|)|> k^{-\delta}}}\sqrt{\alpha_g}|c_g(|d|)|\,-\sumflat_{\substack{x\leq (-1)^kd\leq x+H\\
\sqrt{\alpha_g}|c_g(|d|)|> k^{-\delta}}}\sqrt{\alpha_g}c_g(|d|)\right|\\
&\gg \frac H{\sqrt k \log X}.
\end{align*}
Thus we have shown that for $\gg_\eps K^2/(\log K)^{3/2+\eps}$ of the forms $g\in\mathcal S_K$ the short interval $[x,x+H]$, $x\sim X$, contains numbers $(-1)^k d_\pm$, with both $d_\pm$ odd fundamental discriminants, for which $\sqrt{\alpha_g}c_g(|d_+|)>k^{-\delta}$ and $\sqrt{\alpha_g}c_g(|d_-|)<-k^{-\delta}$, for $\gg X/(\log X)^{5/2}$ of the numbers $x\sim X$. As discussed above, this leads to 
\[ 
\gg \frac X{(\log X)^{5/2}H}\asymp\frac{K}{Y(\log X)^{23/2}}\]
real zeroes on both of the line segments $\delta_1$ and $\delta_2$, as claimed. \qed \\

\noindent Next we shall show how the above two propositions just applied follow from Lemmas \ref{second-moment-with-average} and \ref{fourth-moment-with-average}. We start by proving the first key proposition.

\begin{subsection}{Proof of Proposition \ref{Prop1}}
Recall that
\[\mathcal S_K=\bigcup_{k\sim K}B_{k+\frac12}^+.\]
Denote
\[S_{1,g}(x;H):=\left|\sumflat_{x\leq (-1)^kd\leq x+H}\sqrt{\alpha_g}c_g(|d|)\right|\]
and 
\[\mathcal T_{1,g}(X;H):=\#\left\{x\sim X:\,S_{1,g}(x;H)\geq \sqrt H \cdot k^{-1/2}(\log K)^3\right\}.\]
Choose $h$ to be a non-negative smooth function that is supported in the interval $[1/2,5/2]$ and is identically one in $[1,2]$. With the above notation the quantity we need to bound is by Markov's inequality
\begin{align}\label{application-of-Chebyshev}
&\sum_{k\sim K}\sum_{\substack{g\in B_{k+\frac12}^+\\|\mathcal T_{1,g}(X;H)|\geq X/(\log X)^3}} 1 \nonumber\\
&\leq\frac{(\log X)^3}{X}\sum_{k\in\Z}h\left(\frac{k}K\right)\sum_{g\in B_{k+\frac12}^+}|\mathcal T_{1,g}(X;H)| \nonumber\\
&=\frac{(\log X)^3}{X}\sum_{k\in\Z}h\left(\frac{k}K\right)\sum_{g\in B_{k+\frac12}^+}\sum_{\substack{x\sim X\\
|S_{1,g}(x;H)|\geq \sqrt H k^{-1/2}(\log K)^3}}1 \nonumber \\
&\ll \frac {K(\log X)^3}{H(\log K)^6X}\sum_{k\in\Z}h\left(\frac{k}K\right)\sum_{g\in B_{k+\frac12}^+}\alpha_g\sum_{x\sim X}\left|\sumflat_{x\leq (-1)^kd\leq x+H}c_g(|d|)\right|^2. 
\end{align}
By opening the absolute square the inner sum over $x\sim X$ can be rearranged into
\[ 
\sum_{x\sim X}\sumflat_{x\leq(-1)^kd_1\leq x+H}\sumflat_{x\leq(-1)^kd_2\leq x+H}c_g(|d_1|)c_g(|d_2|).\]
Let us first focus on the diagonal terms with $d_1=d_2$. In this case the total contribution to (\ref{application-of-Chebyshev}) is given by

\[\ll \frac {K(\log X)^3}{(\log K)^6X}\sum_{k\in\Z}h\left(\frac{k}K\right)\sum_{g\in B_{k+\frac12}^+}\alpha_g\sumflat_{(-1)^kd\sim X}|c_g(|d|)|^2.\]
Adding a smooth weight function that localises $(-1)^kd\sim X$ and applying Lemma \ref{second-moment-with-average} this is bounded by
\begin{align*}
&\ll K^2\frac {(\log X)^3}{(\log K)^6}\ll \frac{K^2}{(\log K)^3}, 
\end{align*}
as desired.   

Let us then focus on the off-diagonal corresponding to the terms with $d_1\neq d_2$. We apply Lemma \ref{half-int-Petersson} to see that 
\begin{align}\label{off-diag-estimate}
&\sum_{k\in\mathbb Z}h\left(\frac{k}K\right)\sum_{x\sim X}\sumflat_{x\leq(-1)^kd_1\leq x+H}\sumflat_{\substack{x\leq(-1)^kd_2\leq x+H\\
d_1\neq d_2}}\sum_{g\in B^+_{k+\frac12}}\alpha_g c_g(|d_1|)c_g(|d_2|) \nonumber\\
&=\frac{2\pi}3 e\left(-\frac18\right)\sum_{k\in\mathbb Z}h\left(\frac{k}K\right)\sum_{x\sim X}\sumflat_{x\leq(-1)^kd_1\leq x+H}\sumflat_{\substack{x\leq(-1)^kd_2\leq x+H\\
d_1\neq d_2}}\sum_{4|c}\frac{K_{k+\frac12}^+(|d_1|,|d_2|;c)}c \cdot e\left(-\frac k4\right) J_{k-\frac12}\left(\frac{4\pi\sqrt{|d_1d_2|}}c\right).
\end{align}
But using the uniform estimate (\ref{J-Bessel}), the Weyl bound (\ref{modified-Kloosterman-bound}) and noting that $|d_1|$, $|d_2|\asymp X$, the inner sum may be bounded as 
\begin{align*}
\sum_{4|c}\frac{K_{k+\frac12}^+(|d_1|,|d_2|;c)}c \cdot i^{k} J_{k-\frac12}\left(\frac{4\pi\sqrt{|d_1d_2|}}c\right)&\ll_\eps \sum_{4|c}\frac{c^{1/2+\eps}X^{1/2}}c\cdot\frac{\sqrt{|d_1d_2|}}c\cdot\frac1{\sqrt k}\left(\frac{2\pi e\sqrt{|d_1d_2|}}{ck}\right)^{k-1/2}\\
&\ll_\eps\frac{X^{3/2}}{\sqrt k}\left(\frac{2\pi e\sqrt{|d_1d_2|}}{k}\right)^{k-1/2}\sum_{4|c}\frac1{c^{1+k-\eps}}\\
&\ll\frac{X^{3/2}}{\sqrt k}\left(\frac{2\pi e\sqrt{|d_1d_2|}}{k}\right)^{k-1/2} .
\end{align*}
Now recall that $X\ll\sqrt K$ and $k\asymp K$ to see that the latter factor on the right-hand side decays exponentially. Thus estimating all the other sums in (\ref{off-diag-estimate}) trivially shows that the off-diagonal contribution is    
\begin{align*} 
\ll \frac K{HX}\cdot\frac{X^{3/2}}{\sqrt K}\cdot KXH^2\left(K^{-1/4}\right)^{K-1/2},
\end{align*}
which is negligible in the above range of $X$. This completes the proof. \qed 
\end{subsection} 

\begin{subsection}{Proof of Proposition \ref{Prop2}}

We start by deriving a lower bound for the weighted sum of the terms $|c_g(|d|)|$ on average over the forms $g\in\mathcal S_K$. Applying H\"older's inequality we have 
\begin{align*}
&\sum_{k\in\Z}h\left(\frac{k}K\right)\sum_{g\in B_{k+\frac12}^+}\alpha_g\sumflat_{(-1)^kd\sim X}|c_g(|d|)|^2\\
&\leq\left(\sum_{k\in\Z}h\left(\frac{k}K\right)\sum_{g\in B_{k+\frac12}^+}\sumflat_{(-1)^kd\sim X}\alpha_g^{1/2}\omega_g^{1/2}|c_g(|d|)|\right)^{2/3}\left(\sum_{k\sim K}\sum_{g\in B_{k+\frac12}^+}\sumflat_{(-1)^kd\sim X}\alpha_g^2\omega_g^{-1}|c_g(|d|)|^4\right)^{1/3},
\end{align*}
where $h$ is a smooth compactly supported function that minorises the characteristic function of the interval $[1,2]$ and satisfies $\widehat h(0)\neq 0$.

Using Lemmas \ref{second-moment-with-average} and \ref{fourth-moment-with-average} for the second and fourth moments of the Fourier coefficients respectively we obtain
\begin{align*}
\sum_{k\in\Z}h\left(\frac{k}K\right)\sum_{g\in B_{k+\frac12}^+}\sumflat_{(-1)^kd\sim X}\alpha_g^{1/2}\omega_g^{1/2}|c_g(|d|)|\gg\frac{XK}{(\log XK)^{1/2}}.
\end{align*}
On the other hand, by Lemma \ref{second-moment-with-average} and Waldspurger's formula we also have 
\begin{align*}
&\sum_{k\in\Z}h\left(\frac{k}K\right)\sum_{g\in B_{k+\frac12}^+}\sumflat_{\substack{(-1)^kd\sim X\\
L(1/2,f\otimes \chi_{d})>(\log XK)^2}}\alpha_g^{1/2}\omega_g^{1/2}|c_g(|d|)|\\
&<(\log XK)^{-1}\sum_{k\in\Z}h\left(\frac{k}K\right)\sum_{g\in B_{k+\frac12}^+}\sumflat_{\substack{(-1)^kd\sim X\\
L(1/2,f\otimes\chi_{d})>(\log XK)^2}}\alpha_g|c_g(|d|)|^2\\
&\ll \frac{XK}{\log XK}.
\end{align*}
In the first estimate we have used (\ref{connecting-normalisations}) in the form
\begin{align*}
\alpha_g^{1/2}\omega_g^{1/2}|c_g(|d|)|&=\omega_gL\left(\frac12,f\otimes\chi_d\right)^{1/2}\\
&=\frac{\log XK}{\log XK}\omega_g L\left(\frac12,f\otimes\chi_d\right)^{1/2}\\
&<(\log XK)^{-1}\omega_g L\left(\frac12,f\otimes\chi_d\right)\\
&=(\log XK)^{-1}\alpha_g|c_g(|d|)|^2
\end{align*}
when $L(1/2,f\otimes\chi_d)>(\log XK)^2$.

From this we infer the lower bound
\[\sum_{k\in\Z}h\left(\frac{k}K\right)\sum_{g\in B_{k+\frac12}^+}\sumflat_{\substack{(-1)^kd\sim X\\
L(1/2,f\otimes\chi_{d})\leq (\log XK)^{2}}}\alpha_g^{1/2}\omega_g^{1/2}|c_g(|d|)|\gg \frac{KX}{(\log XK)^{1/2}}.\]

Let us now define the set
\[\mathcal V_g:=\left\{x\sim X:\,\sumflat_{\substack{x\leq(-1)^kd\leq x+H\\
L(1/2,f\otimes\chi_{d})\leq (\log XK)^{2}}}\sqrt{\alpha_g}|c_g(|d|)|\geq\frac{H}{k^{1/2}\log X}\right\}.\]

From the work above it follows that
\begin{align*}
\frac{KX}{(\log XK)^{1/2}}&\ll\sum_{k\in\Z}h\left(\frac{k}K\right)\sum_{g\in B_{k+\frac12}^+}\sumflat_{\substack{(-1)^kd\sim X\\
L(1/2,f\otimes\chi_{d})\leq (\log XK)^{2}}}\alpha_g^{1/2}\omega_g^{1/2}|c_g(|d|)|\\
&\ll\frac1H\sum_{k\in\Z}h\left(\frac{k}K\right)\sum_{g\in B_{k+\frac12}^+}\sum_{x\sim X}\sumflat_{\substack{x\leq (-1)^kd\leq x+H\\
L(1/2,f\otimes\chi_{d})\leq (\log XK)^{2}}}\alpha_g^{1/2}\omega_g^{1/2}|c_g(|d|)|\\
&=\frac1H\sum_{k\in\Z}h\left(\frac{k}K\right)\sum_{g\in B_{k+\frac12}^+}\left(\sum_{x\in\mathcal V_g}+\sum_{x\not\in\mathcal V_g}\right)\sumflat_{\substack{x\leq (-1)^kd\leq x+H\\
L(1/2,f\otimes\chi_{d})\leq (\log XK)^{2}}}\alpha_g^{1/2}\omega_g^{1/2}|c_g(|d|)|\\
&\leq\frac1H\sum_{k\in\Z}h\left(\frac{k}K\right)\sum_{g\in B_{k+\frac12}^+}\left(\omega_g(\log XK)H\left|\mathcal V_g\right|+\omega_g^{1/2}\frac{XH}{k^{1/2}\log X}\right),
\end{align*}
where we have used the relation $\omega_g^{1/2}\alpha_g^{1/2}|c_g(|d|)|=\omega_g\sqrt{L(1/2,f\otimes\chi_d)}$ in the last estimate. Using an easy estimate $\sum_{g\in B_{k+\frac12}^+}\omega_g^{1/2}\ll\sqrt k$ (which follows from the Cauchy--Schwarz inequality and (\ref{weights-average})) we conclude that
\begin{align}\label{lower-bound}
\sum_{k\in\Z}h\left(\frac{k}K\right)\sum_{g\in B_{k+\frac12}^+}\omega_g\left|\mathcal V_g\right|\gg \frac{KX}{(\log XK)^{3/2}}.
\end{align}
To remove the harmonic weights $\omega_g$ note that by \cite[(8.18)]{Balkanova-Frolenkov2021} we have 
\begin{align*}
\sum_{g\in B_{k+\frac12}^+}|\mathcal V_g|\gg K\sum_{f\in S_{2k}(1)}\omega_f|\mathcal V_g|L(1,\text{sym}^2f).
\end{align*}
As there exists $A>0$ so that $L(1,\text{sym}^2f)\gg (\log\log K)^{-A}$ for all but $O(K^\eps)$ Hecke eigenforms $f\in\mathcal B_k$ (see \cite{Lau-Wu2006}) and we have the Hoffstein--Lockhart bounds $1/\log k\ll L(1,\text{sym}^2f)\ll\log k$, it follows from (\ref{lower-bound}) that
\begin{align}\label{V_g-on-av}
\sum_{k\in\Z}h\left(\frac{k}K\right)\sum_{g\in B_{k+\frac12}^+}\left|\mathcal V_g\right|\gg_\eps \frac{XK^2}{(\log XK)^{3/2+\eps}}.
\end{align}

\noindent Let us introduce the set
\[\mathcal U:=\left\{g\in\mathcal S_K:\,|\mathcal V_g|\gg\frac{X}{(\log X)^{5/2}}\right\}.\]
Now from (\ref{V_g-on-av}) we deduce that 
\begin{align*}
\frac{XK^2}{(\log XK)^{3/2+\eps}}&\ll_\eps\sum_{g\in\mathcal U}\left|\mathcal V_g\right|+\sum_{g\in \mathcal S_K\setminus\mathcal U}\left|\mathcal V_g\right|\\
&\ll |\mathcal U|X+K^2\cdot\frac X{(\log X)^{5/2}}
\end{align*}
from which we infer the lower bound 
\[|\mathcal U|\gg_\eps\frac{K^2}{(\log XK)^{3/2+\eps}}. \]

\noindent Hence we have shown that for $\gg_\eps K^2/(\log K)^{3/2+\eps}$ of the forms $g\in \mathcal S_K$ we have
\begin{align*}
&\#\left\{x\sim X\,:\,\sumflat_{x\leq (-1)^kd\leq x+H}\sqrt{\alpha_g}|c_g(|d|)|\geq\frac H{k^{1/2}\log X}\right\}\\
&\geq\#\left\{x\sim X\,:\,\sumflat_{\substack{x\leq (-1)^kd\leq x+H\\
L(1/2,f\otimes\chi_d)\leq (\log XK)^2}}\sqrt{\alpha_g}|c_g(|d|)|\geq\frac H{k^{1/2}\log X}\right\}\\
&\gg\frac X{(\log X)^{5/2}},
\end{align*}
which is what we wanted to prove. \qed

\end{subsection}

\section{Proof of Theorem \ref{second-main-theorem}}

\subsection{Proof of Proposition \ref{non-vanishing}}
\noindent First we explain how to use Lemmas \ref{second-moment} and \ref{fourth-moment} to deduce Proposition \ref{non-vanishing}. Recall the definition of the Iwaniec--Sarnak mollifier $\mathcal M_{f,d}$ from Section $2$. Let $\theta>0$ be arbitrarily small, but fixed. Also, let $h$ be a compactly supported smooth function specified later. Note that by (\ref{connecting-normalisations}) the constraint $\sqrt{\alpha_g}|c_g(|d|)|> k^{-1/2-\theta}$ is equivalent to $\omega_f L(1/2,f\otimes\chi_d)> k^{-1-2\theta}$. Using the Cauchy--Schwarz inequality we have 
\begin{align*}
&\left(\sum_{k\in\Z}h\left(\frac{2k}K\right)\sum_{\substack{f\in \mathcal B_k\\
\omega_f L(1/2,f\otimes\chi_d)> k^{-1-2\theta}}}
\omega_f L\left(\frac12,f\otimes\chi_{d}\right)\mathcal M_{f,d}\right)^2\\
&\leq\left(\sum_{k\in\Z}h\left(\frac{2k}K\right)\sum_{\substack{f\in\mathcal B_k\\
\omega_f L(1/2,f\otimes\chi_d)> k^{-1-2\theta}}}
 \omega_f\right)\left(\sum_{k\in\Z}h\left(\frac{2k}K\right)\sum_{\substack{f\in\mathcal B_k\\
\omega_f L(1/2,f\otimes\chi_d)> k^{-1-2\theta}}}\omega_f L\left(\frac12,f\otimes\chi_{d}\right)^2\mathcal M_{f,d}^2 \right).
\end{align*}
First we note that 
\begin{align*}
&\sum_{k\in\Z}h\left(\frac{2k}K\right)\sum_{\substack{f\in \mathcal B_k\\
\omega_f L(1/2,f\otimes\chi_d)> k^{-1-2\theta}}}
\omega_f L\left(\frac12,f\otimes\chi_{d}\right)\mathcal M_{f,d}\\
&=\sum_{k\in\Z}h\left(\frac{2k}K\right)\sum_{f\in \mathcal B_k}
\omega_f L\left(\frac12,f\otimes\chi_{d}\right)\mathcal M_{f,d}+O\left(\sum_{k\in\Z}h\left(\frac{2k}K\right)\sum_{\substack{f\in \mathcal B_k\\
\omega_f L(1/2,f\otimes\chi_d)\leq k^{-1-2\theta}}}
\omega_f L\left(\frac12,f\otimes\chi_{d}\right)\mathcal M_{f,d}\right)\\
&=\sum_{k\in\Z}h\left(\frac{2k}K\right)\sum_{f\in \mathcal B_k}
\omega_f L\left(\frac12,f\otimes\chi_{d}\right)\mathcal M_{f,d}+O\left(K^{-1-2\theta}\sum_{k\in\Z}h\left(\frac{2k}K\right)\sum_{f\in\mathcal B_k}|\mathcal M_{f,d}|\right)\\
&=\sum_{k\in\Z}h\left(\frac{2k}K\right)\sum_{f\in \mathcal B_k}
\omega_f L\left(\frac12,f\otimes\chi_{d}\right)\mathcal M_{f,d}+O_\eps\left(K^{1-2\theta+\eps}\right)\\
&\sim \frac K2\int\limits_0^\infty h(t)\,\mathrm d t,
\end{align*}
where in the last step Lemma \ref{second-moment} was used and the penultimate estimate follows from the easy bound\footnote{Note that the normalisation in \cite{Iwaniec-Sarnak2000} differs slightly from ours.}
\[ 
\sum_{k\in\Z}h\left(\frac{2k}K\right)\sum_{f\in\mathcal B_k}|\mathcal M_{f,d}|\ll_\eps K^{2+\eps},\]
which is a direct consequence of the penultimate estimate in \cite[p. 164]{Iwaniec-Sarnak2000} and the Cauchy--Schwarz inequality.
   
On the other hand, by Lemma \ref{fourth-moment} and using non-negativity of the summands we have
\begin{align*}
&\sum_{k\in\Z}h\left(\frac{2k}K\right)\sum_{\substack{f\in\mathcal B_k\\
\omega_f L(1/2,f\otimes\chi_d)> k^{-1-2\theta}}}\omega_f L\left(\frac12,f\otimes\chi_{d}\right)^2\mathcal M_{f,d}^2\\
&\leq \sum_{k\in\Z}h\left(\frac{2k}K\right)\sum_{f\in\mathcal B_k}\omega_f L\left(\frac12,f\otimes\chi_{d}\right)^2\mathcal M_{f,d}^2\\
&\sim K\left(\int\limits_0^\infty h(t)\,\mathrm d t\right)\left(1+\frac{\log |d|K}{\log L} \right).
\end{align*}

Thus, by choosing the length $L$ of the mollifier to be as long as possible i.e. $L=|d|^{-1}K(\log K)^{-20}$, 
\begin{align*}
&\sum_{k\in\Z}h\left(\frac{2k}K\right)\sum_{\substack{f\in \mathcal B_k\\
\omega_f L(1/2,f\otimes\chi_d)> k^{-1-2\theta}}}\omega_f\\
&\geq\frac{K^2}{4K}\cdot\frac1{1+\frac{\log |d|K}{\log L}}\left(\int\limits_0^\infty h(t)\,\mathrm d t\right)+o(K) \nonumber\\
&\geq\frac K4\left(1-\frac{\log |d|}{2\log K-20\log\log K}-\frac{\log K}{2\log K-20\log\log K}+o_{K\rightarrow\infty}(1)\right)\left(\int\limits_0^\infty h(t)\,\mathrm d t\right) \nonumber \\
&>\frac K8\left(1-\frac{\log |d|}{\log K}+o_{K\rightarrow\infty}(1)\right)\left(\int\limits_0^\infty h(t)\,\mathrm d t\right)
\end{align*}
for all fundamental discriminants $|d|\leq K^c$ (with some $c>0$). As
\[ 
\sum_{k\in\Z}h\left(\frac{2k}K\right)\sum_{f\in\mathcal B_k}\omega_f\sim\frac K2\int\limits_0^\infty h(t)\,dt\]
this in particular implies
\[ 
\sum_{k\in\Z}h\left(\frac{2k}K\right)\sum_{\substack{f\in \mathcal B_k\\
\omega_f L(1/2,f\otimes\chi_d)> k^{-1-2\theta}}}\omega_f>\left(\frac14-\frac\eps2\right)\sum_{k\in\Z}h\left(\frac{2k}K\right)\sum_{f\in\mathcal B_k}\omega_f\]
for large enough $K$. 

Now choose the weight function $h$ so that it is supported in the interval $[2,4]$ and is identically one in $[2+\nu,4-\nu]$, where $\nu>0$ is fixed and sufficiently small in terms of $\eps>0$. The harmonic weights $\omega_f$ may be removed again by using \cite[Lemma 8.9.]{Balkanova-Frolenkov2021} (similarly as in \cite{Iwaniec-Sarnak2000}, in particular see the comments on p. $165$) to get the same proportion of the event $\omega_fL(1/2,f\otimes\chi_d)>k^{-1-2\theta}$ for the natural average:
\begin{align}\label{non-zero-Fourier-coeff}
\sum_{k\sim K}\sum_{\substack{f\in \mathcal B_k\\
\omega_f L(1/2,f\otimes\chi_d)> k^{-1-2\theta}}}1 &\geq \sum_{k\in\Z}h\left(\frac{2k}K\right)\sum_{\substack{f\in \mathcal B_k \\
\omega_f L(1/2,f\otimes\chi_d)> k^{-1-2\theta}}}1 \nonumber\\
&>\left(\frac1{4}-\frac\eps2\right)\sum_{k\in\Z}h\left(\frac{2k}K\right)\sum_{f\in\mathcal B_k}1 \nonumber\\
&\geq\left(\frac14-\frac\eps2\right)(1-\nu)\#\mathcal S_K \nonumber \\
&\geq \left(\frac14-\eps\right)\#\mathcal S_K.
\end{align}
for large enough $K$. This completes the proof of Proposition \ref{non-vanishing}.

\subsection{Sign changes of Fourier coefficients along squares}

The final thing to do is to use Proposition \ref{non-vanishing} to complete the proof of Theorem \ref{second-main-theorem}. Recall that our aim is to produce many pairs of odd integers for which (\ref{real-zero}) holds. For this we seek such pairs in short intervals and the idea here is to connect short sums of the Fourier coefficients to long sums using the results of \cite{Matomaki-Radziwill2016}. Let $\eta>0$ be as in Proposition \ref{Fourier-coeff}. Given $g\in \mathcal S_K$, fix any odd fundamental discriminant $d$ and define the multiplicative function $h_g:\N\longrightarrow\{-1,0,1\}$ by setting
\[
h_g(p^\ell):=\begin{cases}
\text{sgn}(c_g(|d|)^{-1}c_g(|d|p^{2\ell})) & \text{if }|c_g(|d|)^{-1}c_g(|d|p^{2\ell})|\geq p^{-\ell\eta/2}\,\text{and }p>2\\
0 & \text{otherwise}
\end{cases} 
\] 
Here $\text{sgn}(x)$ is the sign of $x\in\R$ and we interpret $c_g(|d|)^{-1}=0$ if $c_g(|d|)=0$.

Note that if $h_g(m)\neq 0$, then $m$ is odd, $c_g(|d|)\neq 0$, $|c_g(|d|m^2)|\geq |c_g(|d|)|m^{-\eta/2}$, and finally $h_g(m)=\text{sgn}(c_g(|d|)^{-1}c_g(|d|m^2))$. These follow from the fact that $m\mapsto c_g(|d|m^2)$ satisfy the multiplicative property $c_g(|d|)c_g(|d|m^2n^2)=c_g(|d|m^2)c_g(|d|n^2)$ (when $(m,n)=1$) \cite[(1.18)]{Shimura1973} for any fixed fundamental discriminant $d$. From this it follows that the map $m\mapsto c_g(|d|)^{-1}c_g(|d|m^2)$ defines a multiplicative function when $c_g(|d|)\neq 0$. 

Let $\theta>0$ be a fixed quantity chosen later and define 
\[
S_{K,d}:=\bigcup_{k\sim K}\{g\in B_{k+\frac12}^+:\,\sqrt{\alpha_g}|c_g(|d|)|> k^{-1/2-\theta}\}.
\] 
To prove Theorem \ref{second-main-theorem} we adapt the argument of \cite[Section 3]{Lester-Matomaki-Radziwill2018}. We start with a key auxiliary result. 

\begin{lemma}\label{final-key-lemma}
Let $\varepsilon>0$ be arbitrary but fixed, and $d\ll 1$ be a fixed fundamental discriminant\footnote{Recall that the function $h_g$ depends on $d$.}. Then there exists $X_0(\varepsilon)$ such that for $X_0<X<K$ we have, for all but at most $\varepsilon\# S_{K,d}$ forms $g\in S_{K,d}$,
\[ 
\left|\frac1X\sum_{n\sim X}|h_g(n)|-\frac1X\left|\sum_{n\sim X}h_g(n)\right|\right|\gg_{\eta,\varepsilon} 1.\]
\end{lemma} 

\begin{proof}

It suffices to show the following two assertions: 

\begin{enumerate}
\item For all but at most $(\varepsilon/2)\#S_{K,d}$ of the forms $g\in S_{K,d}$ we have 
\[ 
\frac1X\sum_{n\sim X}|h_g(n)|\gg_{\eta,\varepsilon}1,\]
where the implicit constant is independent of $g$. \\
\item For all but at most $(\varepsilon/2)\#S_{K,d}$ of the forms $g\in S_{K,d}$ we have
\[ 
\frac1X\sum_{n\sim X}h_g(n)=o(1).\]
\end{enumerate}
\noindent Towards (1) we argue as follows. Let us first assume that the fixed fundamental discriminant $d$ is positive. Then note that by the identity (\ref{Fourier-coeff-rel}), the triangle inequality, and Lemma \ref{Murty-Sinha} we have
\begin{align*}
\sum_{\substack{k\sim K\\
k\equiv 0\,(2)}}\sum_{\substack{g\in B_{k+\frac12}^+\\
\sqrt{\alpha_g}|c_g(|d|)|>k^{-1/2-\theta}}}\sum_{\substack{p\leq X\\
|c_g(|d|)^{-1}c_g(|d|p^2)|<p^{-\eta/2}}}\frac1p &\ll\sum_{\substack{k\sim K\\
k\equiv 0\,(2)}}\sum_{f\in \mathcal B_k}\sum_{\substack{p\leq X\\
|\lambda_f(p)-\chi_d(p)/\sqrt p|<p^{-\eta/2}}}\frac1p \\
&\leq \sum_{\substack{k\sim K\\
k\equiv 0\,(2)}}\sum_{f\in \mathcal B_k}\sum_{\substack{p\leq X\\
|\lambda_f(p)|<p^{-\eta/2}+1/\sqrt p}}\frac1p \\
&\ll\sum_{\substack{k\sim K\\
k\equiv 0\,(2)}} \#\mathcal B_k\sum_{p\leq X}\left(p^{-3/2}+p^{-1-\eta/2}+\frac{\log p}{p\log k}\right) \\
&\ll_\eta\sum_{\substack{k\sim K\\
k\equiv 0\,(2)}}\#\mathcal B_k\\
&\ll\sum_{\substack{k\sim K\\
k\equiv 0\,(2)}}\sum_{\substack{g\in B_{k+\frac12}^+\\
\sqrt{\alpha_g}|c_g(|d|)|>k^{-1/2-\theta}}}1,
\end{align*}
where in the last step we have used (\ref{non-zero-Fourier-coeff}) (together with (\ref{connecting-normalisations}) and the fact that $c_g(|d|)=0$ for positive $d$ when $k$ is odd), and the fact that $\#\mathcal B_k\asymp k$.

Hence there is a positive constant $C_1$ depending only at most on $\eta$ and $\theta$ such that for any given $\varepsilon>0$,
\begin{align*}
\sum_{\substack{p\leq X \\
h_g(p)=0}}\frac1p\leq\frac {C_1}{\varepsilon}
\end{align*}
for all but at most $(\varepsilon/2)\# I_{K,d}$ forms $g\in I_{K,d}$, where
\begin{align*}
I_{K,d}:=\bigcup_{\substack{k\sim K\\
k\equiv 0\,(2)}}\{g\in B_{k+\frac12}^+:\,\sqrt{\alpha_g}|c_g(|d|)|>k^{-1/2-\theta}\}.
\end{align*}
Similarly, for a fixed negative fundamental discriminant $d$, one shows that there is a positive constant $C_2$ depending only at most on $\eta$ and $\theta$ such that for any given $\varepsilon>0$,
\begin{align*}
\sum_{\substack{p\leq X \\
h_g(p)=0}}\frac1p\leq\frac{C_2}{\varepsilon}
\end{align*}
for all but at most $(\varepsilon/2)\# J_{K,d}$ forms $g\in J_{K,d}$, where
\begin{align*}
J_{K,d}:=\bigcup_{\substack{k\sim K\\
k\equiv 1\,(2)}}\{g\in B_{k+\frac12}^+:\,\sqrt{\alpha_g}|c_g(|d|)|>k^{-1/2-\theta}\}.
\end{align*} 
Note that for any given $d$, $S_{K,d}=I_{K,d}\cup J_{K,d}$ with one of the sets $I_{K,d},\, J_{K,d}$ being empty (as $c_g(|d|)=0$ when $(-1)^kd<0)$. Now the first assertion follows from \cite[Theorem 2]{Hildebrand1987}. 

For the second assertion, observe that, for a fixed positive fundamental discriminant $d$, 
\begin{align}\label{identity-towards-Halasz}
\sum_{\substack{p\leq X\\
h_g(p)=-1}}\frac1p=\sum_{\substack{p\leq X\\
c_g(|d|)^{-1}c_g(|d|p^2)<0}}\frac1p-\sum_{\substack{p\leq X\\
-p^{-\eta/2}<c_g(|d|)^{-1}c_g(|d|p^2)<0}}\frac 1p.
\end{align}
Recalling (\ref{Fourier-coeff-rel}) and using the fact that $\chi_d(p)\in\{-1,0,1\}$ it follows that the first sum on the right-hand side of (\ref{identity-towards-Halasz}) is
\begin{align*}
&\geq \sum_{\substack{p\leq X\\
\lambda_f(p)<-1/\sqrt p}}\frac1p \\
&\geq \sum_{\substack{p\leq X\\
\lambda_f(p)<0}}\frac1p-\sum_{\substack{p\leq X\\
-1/\sqrt p<\lambda_f(p)<0}}\frac1p.
\end{align*}
Arguing identically as in \cite[p. 1610]{Lester-Matomaki-Radziwill2018} we have that
\begin{align*}
\sum_{\substack{p\leq X\\
\lambda_f(p)<0}}\frac1p\geq\frac{1+o(1)}8\log\log X+\sum_{\log X\leq p\leq X^{1/1000}}\frac{\lambda_f(p^2)-2\lambda_f(p)}p
\end{align*} 
and using the large sieve inequality of Lemma \ref{large-sieve-2} the latter sum on the right-hand side contributes $o(\log\log X)$ for almost all forms $f\in\cup_{k\sim K\,\text{even}}\mathcal B_k$. 

To summarise, we have deduced that 
\begin{align*}
\sum_{\substack{p\leq X\\
h_g(p)=-1}}\frac1p\geq\frac{(1+o(1))}{8}\log\log X-\sum_{\substack{p\leq X\\
-1/\sqrt p<\lambda_f(p)<0}}\frac1p-\sum_{\substack{p\leq X\\
-p^{-\eta/2}<c_g(|d|)^{-1}c_g(|d|p^2)<0}}\frac 1p
\end{align*}
for almost all forms $g\in\cup_{k\sim K\,\text{even}}B_{k+1/2}^+$. But from the arguments used to establish the first assertion above we know that there is an absolute constant $C_3$ so that
\[ 
\sum_{\substack{p\leq X\\
-p^{-\eta/2}<c_g(|d|)^{-1}c_g(|d|p^2)<0}}\frac 1p\leq\frac{C_3}{\varepsilon}\]
for all but $(\varepsilon/4)\# I_{K,d}$ of the forms $g\in I_{K,d}$. A similar computation also shows that there exists an absolute constant $C_4$ so that
\[ 
\sum_{\substack{p\leq X\\
-1/\sqrt p<\lambda_f(p)<0}}\frac1p\leq\frac{C_4}{\varepsilon}
\]
for all but $(\varepsilon/4)\# I_{K,d}$ of the forms $g\in I_{K,d}$ (recall here that $f\in\mathcal B_k$ corresponds to some $g\in B_{k+\frac12}^+$ under the Shimura correspondence). Thus for all but at most $(\eps/2)\#I_{K,d}$ of the forms $g\in I_{K,d}$ we have
\[
\sum_{\substack{p\leq X\\
h_g(p)=-1}}\frac1p\geq\frac{(1+o(1))}{8}\log\log X. 
\]
Similarly one shows that the same estimate holds for all but $(\varepsilon/2)\# J_{K,d}$ of the forms $g\in J_{K,d}$ when the fixed fundamental discriminant $d$ is negative. This finishes the proof of (2) by Hal\'asz's theorem (Lemma \ref{Halasz}) by recalling that one of the sets $I_{K,d},\,J_{K,d}$ is empty for any given $d$. This concludes the proof of the lemma. 

\end{proof}

\noindent To finish the proof of Theorem \ref{second-main-theorem} we argue as follows. Set now $\theta=\eta/4$, and let $\eps>0$ be arbitrarily small, but fixed.  Note that by definition, with $|d|\ll 1$ being fixed and choosing $X\asymp\sqrt{K/|d|Y}$, a sign change of $h_g$ on $[X,2X]$ means the existence of odd natural numbers $m$ and $n$ at most $2X$ so that
\[ 
c_g(|d|m^2)\leq-|c_g(|d|)|m^{-\eta/2}<|c_g(|d|)|n^{-\eta/2}\leq c_g(|d|n^2).
\]
Multiplying both sides by $\sqrt{\alpha_g}$ we require 
\[ 
\sqrt{\alpha_g}c_g(|d|m^2)\leq-\sqrt{\alpha_g}|c_g(|d|)|m^{-\eta/2}<\sqrt{\alpha_g}|c_g(|d|)|n^{-\eta/2}\leq \sqrt{\alpha_g}c_g(|d|n^2).
\]
But if $\sqrt{\alpha_g}|c_g(|d|)|> k^{-1/2-\eta/4}$ (which holds for at least $(1/4-\eps/4)\#\mathcal S_K$ of the forms $g\in\mathcal S_K$ by Proposition \ref{non-vanishing}) this implies, as e.g. $n\leq 2X\ll \sqrt k$, 
\[
\sqrt{\alpha_g}c_g(|d|m^2)<-k^{-\delta}<k^{-\delta}<\sqrt{\alpha_g}c_g(|d|n^2),
\]
where $\delta=1/2+\eta/2$. As discussed in the beginning of Section 9, this guarantees an existence of a zero on both of the lines $\delta_1$ and $\delta_2$.

Thus it suffices to exhibit sign changes of the multiplicative function $h_g$. Let us first fix a positive odd fundamental discriminant with $d_1\ll 1$ (one can e.g. take $d_1=5$) and denote the corresponding function $h_g$ by $h_g^{(1)}$. Suppose $H=H(\eta,\varepsilon)$ is sufficiently large fixed constant and $X_0(\eta,\varepsilon)<X\asymp\sqrt{K/|d_1|Y}$. By Proposition \ref{non-vanishing} we know that $\sqrt{\alpha_g}|c_g(|d_1|)|>k^{-1/2-\eta/4}$ for at least $(1/4-\eps/4)\#\mathcal S_K$ of the forms $g\in\mathcal S_K$. Then using Lemmas \ref{MR} and \ref{final-key-lemma} we have that for at least $(1-\varepsilon/4)\cdot(1/4-\eps/4)\#\mathcal S_K\geq(1/4-\varepsilon/2)\#\mathcal S_K$ of the forms $g\in \mathcal S_K$,
\begin{align*}
&\frac1H\sum_{x\leq n\leq x+H}|h_g^{(1)}(n)|-\frac1H\left|\sum_{x\leq n\leq x+H}h_g^{(1)}(n)\right|\\
&=\frac1X\sum_{n\sim X}|h_g^{(1)}(n)|-\frac1X\left|\sum_{n\sim X}h_g^{(1)}(n)\right|+O\left((\log H)^{-1/200}\right)\\
&\gg_{\eta,\varepsilon}1
\end{align*}
for all $x\sim X$ outside an exceptional set of size at most $CX(\log H)^{-1/100}$ for some absolute constant $C>1$. This implies that for each such form $g\in S_K$ there exist numbers $X\leq x_1<\cdots <x_N\leq 2X$ with $x_{j+1}-x_j>H$ and $N\geq\frac1{10}\cdot\frac XH$ such that
\[\left|\frac1H\sum_{x_j\leq n\leq x_j+H}|h_g^{(1)}(n)|-\frac1H\left|\sum_{x_j\leq n\leq x_j+H}h_g^{(1)}(n)\right|\right|\gg 1\]
for each $j=1,...,N$. We conclude that each interval $[x_j,x_j+H]$ yields a sign change of $h_g^{(1)}(n)$ for at least $(1/4-\varepsilon/2)\#\mathcal S_K$ of the forms $g\in\mathcal S_K$ above. As discussed in the introduction, this leads to
\[\gg\frac XH\gg\sqrt{\frac KY}\]
zeroes on both of the lines $\delta_1$ and $\delta_2$ with $\Im(z)\geq Y$ for at least $(1/4-\eps/2)\#\mathcal S_K$ of the forms $g\in\mathcal S_K$.

A similar argument shows that fixing a negative odd fundamental discriminant with $d_2$ with $|d_2|\ll 1$ (e.g. $d_2=-3$), the corresponding function $h_g^{(2)}$ has $\gg\sqrt{K/Y}$ sign changes for at least $(1/4-\eps/2)\#\mathcal S_K$ of the forms $g\in\mathcal S_K$. But as at most one of the inequalities $\sqrt{\alpha_g}|c_g(|d_1|)|>k^{-1/2-\eta/4}$ and $\sqrt{\alpha_g}|c_g(|d_2|)|>k^{-1/2-\eta/4}$ can hold for a given $g\in B_{k+\frac12}^+$ (as $d_1d_2<0$ and $c_g(|d|)=0$ whenever $(-1)^k d<0$) it follows that the subsets of $\mathcal S_K$ for which $h_g^{(1)}$ and $h_g^{(2)}$ exhibit sign changes are disjoint. Thus we get $\gg\sqrt{K/Y}$ real zeroes on both of the line segments $\delta_1$ and $\delta_2$ for at least $(1/2-\eps)\#\mathcal S_K$ forms in $\mathcal S_K$. This completes the proof. \qed



\bibliography{Half_integral_real_zeros}
\bibliographystyle{plain}

\end{document}